\newtheorem{lem}{Lemma}[section]
\newtheorem{thm}[lem]{Theorem}
\newtheorem{proposition}[lem]{Proposition}
\theoremstyle{definition}
\newtheorem{remark}[lem]{Remark}
\DeclareMathAlphabet{\curly}{U}{rsfs}{m}{n}
\newcommand{\tors}{\operatorname{tors}}
\newcommand{\Gal}{\operatorname{Gal}}
\newcommand{\Q}{\mathbb{Q}}
\newcommand{\CC}{\mathcal{C}}
\newcommand{\Z}{\mathbb{Z}}
\newcommand{\F}{\mathbb{F}}
\newcommand{\Qbar}{\overline{\mathbb{Q}}}
\def\diam#1{\langle#1\rangle}
\DeclareMathOperator{\rk}{{rk}}
\newcommand{\PP}{{\mathbb P}}
  \newcommand{\im}{\operatorname{im}}
\newcommand{\borna}[1]{{\color{blue} \sf $\clubsuit\clubsuit\clubsuit$ Borna: [#1]}}
\newcommand{\lmfdbec}[3]{\href{https://www.lmfdb.org/EllipticCurve/Q/#1/#2/#3}{#1.#2#3}}
\mathchardef\mhyphen="2D
\title{Quadratic points on bielliptic modular curves}
\author{Filip Najman}
\address{University of Zagreb, Bijeni\v{c}ka Cesta 30, 10000 Zagreb, Croatia}
\email{fnajman@math.hr}
\urladdr{http://web.math.pmf.unizg.hr/~fnajman/}
\author{Borna Vukorepa}
\address{University of Zagreb, Bijeni\v{c}ka Cesta 30, 10000 Zagreb, Croatia}
\email{borna.vukorepa@math.hr}
\urladdr{}
\begin{document}

\begin{abstract}
Bruin and Najman \cite{BruinNajman15}, Ozman and Siksek \cite{OzmanSiksek19}, and Box \cite{Box19} described all the quadratic points on the modular curves of genus $2\leq g(X_0(n)) \leq 5$. Since all the hyperelliptic curves $X_0(n)$ are of genus $\leq 5$ and as a curve can have infinitely many quadratic points only if it is either of genus $\leq 1$, hyperelliptic or bielliptic, the question of describing the quadratic points on the bielliptic modular curves $X_0(n)$ naturally arises; this question has recently also been posed by Mazur.

We  answer Mazur's question completely and describe the quadratic points on all the bielliptic modular curves $X_0(n)$ for which this has not been done already. The values of $n$ that we deal with are $n=60,62,69,79,83,89,92,94,95,101,119$ and $131$; the curves $X_0(n)$ are of genus up to $11$. We find all the exceptional points on these curves and show that they all correspond to CM elliptic curves. The two main methods we use are Box's relative symmetric Chabauty method and an application of a moduli description of $\Q$-curves of degree $d$ with an independent isogeny of degree $m$, which reduces the problem to finding the rational points on several quotients of modular curves.
\end{abstract}
\subjclass{11G05, 14G05, 11G18}
\keywords{Modular curves, Quadratic points, Chabauty, Jacobian}

\maketitle

\section{Introduction}

An important problem in the theory of elliptic curves over number fields is to understand their possible torsion groups, parametrized by points on the modular curves $Y_1(m,n)$, and isogenies, parametrized by points on $Y_0(n)$.
After Mazur \cite{mazur78} determined the possible torsion groups over $\Q$, Kamienny, Kenku and Momose \cite{kamienny92, KM88} determined the possible torsion groups over quadratic fields. Following a pause of almost 3 decades, recent years have seen a number of advances in understanding torsion groups over number fields of degree $d$: Derickx, Etropolski, van Hoeij, Morrow and Zureick-Brown \cite{Deg3Class} determined the possible torsion groups over cubic fields and Derickx, Kamienny, Stein and Stoll \cite{DKSS} determined the primes dividing the order of all the possible torsion groups over number fields of degree $4 \leq d \leq  7$. Merel proved that the set of all possible torsion groups over all number fields of degree $d$ is finite, for any positive integer $d$ \cite{merel}. All the possible torsion groups over a fixed number field $K$, for many fixed number fields of degree 2, 3 and 4 have also been determined, see \cite{najman2010,bruin_najman2016,trbovic2020}.

Unfortunately, much less is known about possible degrees of isogenies of elliptic curves over number fields. The only number field $K$ over which the $K$-rational points on $X_0(n)$ are known for all $n$ is $\Q$, by the results of Mazur \cite{mazur77} and Kenku (see \cite{kenku1981} and the references therein) and accordingly, the only positive integer $d$ such that we know all the possible $n$ with $Y_0(n)$ having a rational point of degree $d$, is $d=1$. Even the problem of finding all $n$ such that $Y_0^+(n)(\Q)$ contains points that are neither CM nor cusps (the set of such $n$ has been conjectured by Elkies \cite{Elkies2004} to be finite), which can be considered a sub-problem of the case $d=2$, is still open.

However, lately there has been a great deal of progress in our understanding of quadratic points on $Y_0(n)$, which we will now describe, and whose further advancement is the purpose of this paper. Momose \cite[Theorem B]{momose} proved that for any fixed quadratic field $K$ which is not imaginary of class number 1, $X_0(p)(K)$ has non-cuspidal points for only finitely many $p$.
Assuming the Generalized Riemann Hypothesis, Banwait \cite{Banwait21} explicitly found, for some specific fixed number fields $K$, all the primes $p$ for which $X_0(p)(K)$ has non-cuspidal points. The first author \cite{NajmanCamb} determined all the prime degree isogenies of non-CM elliptic curves $E$ with $j(E)\in \Q$ for number fields of degree $d\leq 7$ (and conditionally on Serre's uniformity conjecture for all $d$). This has been extended to all $d>1.4\times 10^7$  unconditionally by Le Fourn and Lemos \cite[Theorem 1.3]{LeFournLemos}.

Note that the quadratic points when $Y_0(n)$ has genus $<2$ are not interesting in a sense.  When $Y_0(n)$ has genus 0, the set $Y_0(n)(K)$ is infinite for any number field $K$, while the modular curves $Y_0(n)$ with genus 1 have infinitely many quadratic points, and moreover the points do not admit a nice geometric description. 

On the other hand, for a hyperelliptic curve $X$ (we do not consider elliptic curves to be hyperelliptic; we always assume that hyperelliptic curves have genus $\geq 2$) with $J(X)$ having rank 0 over $\Q$ and with the hyperelliptic map $h:X\rightarrow \PP^1$, all but finitely many quadratic points on $X$ are pullbacks $h^{-1} (\PP^1(\Q))$ of rational points. Since in the case of $X=X_0(n)$, the hyperelliptic involution is almost always an Atkin-Lehner involution $w_d$ for some $d$ dividing $n$, it follows that all the quadratic points in $h^{-1} (\PP^1(\Q))$, and hence all but finitely many quadratic points on $X_0(n)$ correspond to $\Q$-curves of degree $d$. We call the quadratic points on $X$ that are not in $h^{-1} (\PP^1(\Q))$ \textit{exceptional} and, accordingly, the ones that are in $h^{-1} (\PP^1(\Q))$ \textit{non-exceptional} (or alternatively \textit{obvious}). Using these observations, Bruin and the first author \cite{BruinNajman15} described all the quadratic points on the hyperelliptic curves $X_0(n)$ such that $J(X)(\Q)$ is finite; of the 19 values of $n$ such that $X_0(n)$ is hyperelliptic, all but the peculiar case of $n=37$ satisfy that $J(X)(\Q)$ is finite. 

Since all the hyperelliptic $X_0(n)$ have genus $\leq 5$, the next natural step is finding all the (finitely many) quadratic points on the non-hyperelliptic modular curves $X_0(n)$ with $g(X_0(n))\leq 5$ and $\rk(J_0(n)(\Q))=0$, of which there are 15. This has been done by Ozman and Siksek \cite{OzmanSiksek19} by using the fact that the Abel-Jacobi map $\iota :X^{(2)}(\Q)\rightarrow J(X)(\Q)$ which sends $\{P,Q\}$ to $[P+Q-2P_0]$, for some fixed $P_0\in X(\Q)$, is injective, and hence all the quadratic points of $X$ can be found by checking $J(X)(\Q)$, which is, by assumption, finite. 

Box \cite{Box19} completed the description of quadratic points on $X_0(n)$ of genus $2\leq g(X_0(n)) \leq 5$ by describing the quadratic points for the 8 values of $n$ such that $\rk(J_0(n)(\Q))>0$, including the hyperelliptic case of $n=37$. In the cases $n=43,53,61$, the curves $X_0(n)$ turn out to be \textit{bielliptic} with a bielliptic map $b:X_0(n)\rightarrow X_0^+(n)$ and such that $X_0^+(n)$ is an elliptic curve of positive rank over $\Q$. As in the hyperelliptic case, it turns out that all but finitely many quadratic points are in $b^{-1}(X_0^+(n)(\Q))$ and correspond to $\Q$-curves of degree $n$. These points are, as before, called \textit{non-exceptional}, while the (finitely many) remaining points are called \textit{exceptional}. One of the main tools Box uses, and one we will make abundant use of, is the relative symmetric Chabauty method; see \Cref{sec:relsymCha}.

Recall (or see \cite{HarrisSilverman91}) that a curve $X$ defined over $\Q$ and having a $\Q$-rational point can have infinitely many quadratic points if and only if it is of gonality $\leq 2$ or if it is bielliptic with a bielliptic map $b:X\rightarrow E$ such that the elliptic curve $E$ has positive rank over $\Q$. Since the quadratic points on all hyperelliptic curves $X_0(n)$ have already been described, the next logical step towards the problem of determining all the quadratic points on all $X_0(n)$ is to study the bielliptic curves $X_0(n)$. Exactly this has been posed as a Question by Mazur \cite[Question 1 (iv)]{Mazur-quad} at the workshop \textit{Rational Points and Galois Representations} held in May 2021.

Bars \cite{Bars99} determined all the bielliptic modular curves $X_0(n)$ (there are 41 of them) and those among them with infinitely many quadratic points (28 out of the 41 satisfy this). Since many of the bielliptic curves have genus $\leq 5$, the quadratic points on all but 12 have already been described in the aforementioned papers \cite{BruinNajman15,OzmanSiksek19,Box19}. In the table below we list the remaining values $n$, their genus $g(X_0(n))$, and the rank $\rk (J_0(n)(\Q))$ of their Jacobian over $\Q$.

\label{table1}
\begin{table}[h]
	\begin{tabular}{|c|c|c||c|c|c|}
\hline
$n$ & $g(X_0(n))$ & $\rk (J_0(n)(\Q))$& $n$ & $g(X_0(n))$ & $\rk (J_0(n)(\Q))$ \\
\hline
60 & 7 & 0 & 62 & 7& 0\\
69 & 7 & 0  &  79 & 6 & 1\\
83 & 7 &  1 &  89 & 7& 1\\
92 & 10 & 1  &  94 & 11& 0 \\
95 & 9 &  0 & 101 & 8& 1\\
119 & 11 & 0  &  131 & 11&1\\

\hline
\end{tabular}
\caption{The remaining curves}
\end{table}

In this paper we describe the quadratic points on all these modular curves, answering Mazur's question completely. Furthermore, as mentioned above, this also completes the description of quadratic points on all $X_0(n)$ with infinitely many quadratic points. We explicitly find all the exceptional points and show that they correspond to CM elliptic curves and show that the non-exceptional points correspond to $\Q$-curves of degree $d_n$ for some $d_n|n$.  

Although the approach for each of the modular curves is at least a bit different than for the others, our proofs can roughly be grouped into two main methods. The first method, described in \Cref{section:qcurves}, which allows us to solve the cases $n\in \{62,69,92,$ $94 \}$ is to exploit the fact that for some of the $n$ there is a divisor $d$ of $n$ such that $X_0(d)$ is hyperelliptic and hence any putative elliptic curve $E$ with an $n$-isogeny (and hence a $d$-isogeny) over a quadratic field $K$, by the results of \cite{BruinNajman15}, has to either correspond to one of the explicitly known exceptional quadratic points on $X_0(d)$ (and we easily check whether they have an $n$-isogeny over the quadratic field they are defined over) or be a $\Q$-curve of degree $d'$ for some divisor $d'$ of $d$ for which $w_{d'}$ is the hyperelliptic involution; in all our cases we have $(d',(n/d'))=1$. In the latter case $E$ has to be a $\Q$-curve which in addition has an $(n/d')$-isogeny defined over $K$. This leads to the question: is there a modular curve whose points parametrize such elliptic curves? We give an answer to this question and show that such an elliptic curve either corresponds or is isogenous over $K$ to an elliptic curve which corresponds to a \textit{$\Q$-rational} point on one of $2$ or $3$ modular curves; see \Cref{prop:qcurves} and \Cref{prop:qcurves2} for details. The advantage of this method is that it requires very little explicit computation, as it is usually not too challenging to find all the rational points on the necessary modular curves.

The second method, used to deal with $n\in\{60,79,83,89,95,101,119,131\}$ is, following Siksek \cite{siksek} and Box \cite{Box19}, the symmetric relative Chabauty method. In principle what we do in these cases is very similar to \cite{Box19}, but there are a number of tweaks and minor improvements that we do to make the necessary computations, which have previously been done on curves of genus only up to 5, work on our curves, which are of genus $6 \leq g \leq 11$. We explain these cases in detail in \Cref{newmethod}.

The techniques used in the second method are applicable for general curves, not just modular curves. However, the information we have about cusps and automorphisms of modular curves does make some parts of the algorithms easier. In future work we plan to apply this method to other curves $X_0(n)$ of interest and more generally to other modular curves.

The code that verifies all our computations, along with the outputs containing time and memory consumption in their last line, can be found on:
\begin{center}
    \url{https://github.com/brutalni-vux/QuadPtsBielliptic}.
\end{center}

All of our computations were performed on an Intel Xeon W-2133 CPU running at 3.60GHz and with 64 Gb of RAM.

\section*{Acknowledgements}
We thank Maarten Derickx for helpful conversations and suggesting using $I=1-w_n$ in \Cref{sec:131}, Barry Mazur for helpful and motivating correspondence, Pete Clark, Tyler Genao, Paul Pollack and Frederick Saia for sharing useful data about quadratic points corresponding to CM curves over quadratic fields and Oana Adascalitei, Barinder Banwait, Abbey Bourdon, Josha Box, Andrej Dujella, Stevan Gajović and Pip Goodman for useful comments on an earlier draft. Great thanks to the referees for many helpful comments that made the paper significantly better.

We gratefully acknowledge support by the QuantiXLie Centre of Excellence, a
  project co-financed by the Croatian Government and European Union
  through the European Regional Development Fund - the Competitiveness
  and Cohesion Operational Programme (Grant KK.01.1.1.01.0004) and by
  the Croatian Science Foundation under the project
  no. IP-2018-01-1313.


\section{$\Q$-curves}
\label{section:qcurves}


The notion of $\Q$-curves can be defined over general number fields, but for simplicity we will restrict to $\Q$-curves over quadratic fields. For the general theory, see \cite{CremonaNajmanQCurve,Elkies2004}. An elliptic curve $E$ over a quadratic field $K$ is called a $\Q$-curve if it is isogenous (over $\overline \Q$) to its Galois conjugate. Throughout the paper, when saying that curves are isogenous, without mentioning over which field, we will always mean over $\overline \Q$. Let $\sigma$ be the generator of $\Gal(K/\Q)$. By factorising isogenies, we may assume our given isogeny $E\rightarrow E^\sigma$ is cyclic \cite[Lemma A.1]{CremonaNajmanQCurve}, and if this cyclic isogeny $E\rightarrow E^\sigma$ is of degree $d$, we say that $E$ is a $\Q$-curve of degree $d$. Throughout this section, for an integer $m$, by $C_m$ we will denote a cyclic subgroup of order $m$. Note that we allow $\Q$-curves to have complex multiplication. In particular, all elliptic curves with CM are $\Q$-curves.

We now briefly recall definitions and facts about modular curves. For a more thorough treatment, we refer the reader to \cite[IV-3]{DeligneRapoport73}. A point $x\in Y_0(n)$ represents a $\Qbar$-isomorphism class of pairs $(E,C_n)$ of an elliptic curve $E/\Qbar$ together with a cyclic subgroup of $E$ order $n$. The point $x$ is defined over a number field $K$ if and only if $\Gal_K$ acts on the $\Qbar$-isomorphism class of $E$ and on $C_n$. Equivalently, $x\in Y_0(n)$ represents a $\Qbar$-isomorphism class of pairs $(E, \mu)$ of an elliptic curve $E/\Qbar$ together with an $n$-isogeny $\mu:E\rightarrow E'$, defined over $\Qbar,$ where two isogenies $\mu:E\rightarrow E_1$ and $\mu':E'\rightarrow E_1'$  are \textit{isomorphic} if there exist $\Qbar$-isomorphism $\psi:E\rightarrow E'$ and $\varphi:E_1\rightarrow E_1'$ such that the diagram
$$
\xymatrix{
&E \ar[d]^{\mu} \ar[r]^{\psi} & E'\ar[d]^{\mu'}\\
& E_1 \ar[r]^{\varphi} & E_1'
}$$
commutes. The point $(E,C_n)$ is equal to the point $(E,\mu)$ if and only if $C_n=\ker \mu$. 

Let $S_1$ be a subgroup of $E_1$ and $S_2$ a subgroup of $E_2$, both cyclic of order $n$. We will say that these two subgroups are equal and write $S_1=S_2$ if $(E_1, S_1)$ and $(E_2,S_2)$ correspond to the same point on $X_0(n)$, or equivalently, there exists an isomorphism $\phi:E_1 \rightarrow E_2$, defined over $\overline \Q$, such that $\phi(S_1)=S_2$. Note that two isomorphic isogenies have the same kernel. Similarly, throughout the section, we will write $E_1=E_2$ if these two elliptic curves are isomorphic over $\overline \Q$, or equivalently, their $j$-invariants are equal. 

The $\Qbar$-isomorphism class of $(E,\mu)$ is defined over $K$ if both $E/\Qbar$ and $\mu$ (or equivalently $\ker \mu)$ are defined over $K$, in the sense that $\Gal_K$ acts on their $\Qbar$-isomorphism classes. Note that an elliptic curve $E/\Qbar$ is defined over $K$ if and only if $j(E)\in K$.

Let $n$ be a positive integer and factor $n=dm$ with $(d,m)=1$. Let $w_d$ be the Atkin-Lehner involution sending a point $x\in Y_0(n)$, where $x$ corresponds to $(E,C_d,C_{m})$, to the point $w_d(x)$, corresponding to $(E/C_d,E[d]/C_d,(C_{m}+C_d)/C_d)$. Here, quotienting out by $C_d$ means mapping by the $d$-isogeny $\mu$ such that $\ker \mu=C_d$, i.e. $w_d(x)=(\mu(E), \mu(E[d]),\mu(C_m))$. Thus, non-cuspidal $\Q$-rational points on $X_0(n)/w_d$ correspond to pairs
$$\left \{(E, C_d, C_m), (E/C_d,E[d]/C_d,(C_{m}+C_d)/C_d) \right\}$$
which are $\Gal_\Q$-invariant, meaning that either the point $(E, C_d, C_m)$ is defined over $\Q$ or there exists a quadratic extension $K/\Q$ with $\sigma$ generating $\Gal(K/\Q)$ such that
\begin{equation}(E, C_d, C_m)^\sigma=(E/C_d,E[d]/C_d,(C_{m}+C_d)/C_d),\label{eq:qc}\end{equation}
implying that $E$ is a $\Q$-curve of degree $d$ with the additional property that $\mu(C_{m})=C_{m}^\sigma$. We will say that an elliptic curve $F$ \textit{corresponds} to a point on $X_0(n)/w_d$ if there exists an $E$ as above such that $j(F)=j(E)$ or $j(F)=j(\mu(E))$. In the case of $d=n$, the curve $X_0(n)/w_n$ is denoted by $X_0^+(n)$ and its $\Q$-points parametrize pairs consisting of a $\Q$-curve of degree $n$ together with its Galois conjugate (without any further conditions). All the fixed points of $w_d$ correspond to CM elliptic curves.

Note that the equality in \eqref{eq:qc} is an equality of points on the modular curve $X_0(n)$, which is equivalent to the existence of an isomorphism $\phi:E^\sigma \rightarrow E/C_d$, defined over $\overline \Q$, sending $C_d^\sigma$ to $E[d]/C_d$ and $C_m^\sigma$ to $(C_{m}+C_d)/C_d$. 

\begin{remark}\label{rem:1}
To avoid any possible confusion arising from the fact that the notion of the field of definition of an object (elliptic curve, isogeny, etc.) defined over $\Qbar$ might differ from the one that the reader might be more used to, we emphasize that the field of definition of a $\Qbar$-isomorphism class of an object might be different from the field obtained by descending to a number field $K$ by taking a representative defined over $K$ of the $\Qbar$-isomorphism class and then making arguments in its $K$-isomorphism class. On one hand, if there exists a single object in the $\Qbar$-isomorphism class such that the object $A$ is itself defined over $K$, then the $\Qbar$-isomorphism class of this object is necessarily defined over $K$. Indeed, two $\Qbar$-isomorphism classes are either disjoint or equal (as is true for all equivalence classes), and since $\Gal_K$ fixes $A$, it follows that $\Gal_K$ acts on the $\Qbar$-isomorphism class of $A$. On the other hand, the $\Qbar$-isomorphism class of an object might in theory be defined over $K$, without any representative being itself defined over $K$.

To give an example of pathologies that arise when descending $\Qbar$-isomorphism classes down to a number field $K$, consider what is the field of definition of the elliptic curve $E/\Qbar$ with $j(E)=0$. Clearly if one descends down to $\Q(\sqrt 2)$ by taking the representative $E:y^2=x^3+\sqrt 2$ and asks this question, the answer is that $E$ is not defined over $\Q$ as some $\sigma \in \Gal_{\Q(\sqrt 2)}$ sends $E$ to $E':y^2=x^3-\sqrt{2}$, which is not isomorphic to $E$ over $\Q(\sqrt 2)$. However, it is easily seen that the $\Qbar$-isomorphism class of $E$ is defined over $\Q$; indeed $j(E)\in \Q$ and the curve corresponds to a $\Q$-rational point on the moduli stack $X_0(1)$.

To give a more general example, suppose $E/K$ is a $\Q$-curve of degree $d$ and $C_d=\ker \mu$ where $\mu:E\rightarrow E^\sigma$ is a $d$-isogeny, defined over $\Qbar$. 
In general $E/K$ and $E^\sigma/K$ will not be isogenous over $K$. However, by \cite[Lemma A.4]{CremonaNajmanQCurve}, there exists an isogeny $\psi:E\rightarrow (E^\sigma)^\delta$, defined over $K$, to a quadratic twist $(E^\sigma)^\delta$ of $E^\sigma$. The isogenies $\psi$ and $\mu$ are isomorphic over $\Qbar$ and hence the point $x=(E,\mu)\in Y_0(d)$ is defined over $K$. 
So although there might not exist an isogeny over $K$ between the elliptic curves $E/K$ and $E^\sigma/K$, the isogeny $\mu$ between the elliptic curves $E/\Qbar$ and $E^\sigma /\Qbar$ \textit{is} defined over $K$, as are the points $x=(E, \mu)$ and $w_d(x)=x^\sigma=(E^\sigma, \widehat{\mu})$.

\end{remark}

If $E_1/\Qbar,E_2/\Qbar,E_3/\Qbar$ are elliptic curves, $K$ is a number field and $\mu_1:E_1\rightarrow E_2$ and $\mu_2:E_2\rightarrow E_3$ are isogenies over $\Qbar$ which are defined over $K$, then $\mu_2\circ \mu_1$ is defined over $K$. Furthermore if $C_m$ is a subgroup of $E_1$ defined over $K$, then $\mu_1(C_m)$ is also defined over $K$. If $E_4/\Qbar$ and $E_5/\Qbar$ are non-CM and isogenous over $\Qbar$, then the $\Qbar$-isogeny $\mu:E_4\rightarrow E_5$ is defined over $\Q(j(E_4),j(E_5))$ by \cite[Corollary A.5]{CremonaNajmanQCurve}.

We now consider the following problem: for integers $d,m$ with $(d,m)=1$, describe  a finite collection of modular curves $\Gamma$, such that any $\Q$-curve of degree $d$ over a quadratic field with an additional $m$-isogeny gives rise to a rational point on some member of $\Gamma$. Note that a somewhat similar problem is considered in \cite[Proposition 2.2.]{Ellenberg2005}. The following propositions answer this question for $m$ prime and $m=4$, which will be sufficient for our purposes. We say that an elliptic curve has an $n$-isogeny if there exists an isogeny $\varphi:E\rightarrow E'$ with $\ker \varphi \simeq \Z/n\Z$.

\begin{proposition}
\label{prop:qcurves}
Let $E/\Qbar$ be a non-CM $\Q$-curve of degree $d$ defined over a quadratic field $K$ having in addition an $m$-isogeny defined over $K$ with $(m,d)=1$ and $m$ prime. Then either $E$ corresponds to a rational point on $X_0(dm)/w_d$ or is isogenous over $K$ to an elliptic curve which corresponds to a rational point on $X_0^+(dm^2)$.
\end{proposition}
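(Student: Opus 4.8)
The plan is to reduce everything to a single dichotomy coming from the position of the kernel of the extra $m$-isogeny relative to the $\Q$-curve structure. Write $\sigma$ for the generator of $\Gal(K/\Q)$ (note $\sigma^2=1$), fix a cyclic $d$-isogeny $\mu\colon E\to E^\sigma$ witnessing that $E$ is a $\Q$-curve of degree $d$, and set $C_d=\ker\mu$; let $\varphi\colon E\to E'$ be the given $m$-isogeny over $K$, with $C_m=\ker\varphi$, cyclic of order $m$ since $m$ is prime. Then $E'$ and $\varphi$ are defined over $K$ and $(E')^\sigma=E^\sigma/C_m^\sigma$. Because $E$ is non-CM, $\End(E)=\Z$, and comparing degrees in $\End(E)$ of $\mu^\sigma\circ\mu$ (degree $d^2$) with $\hat\mu\circ\mu=[d]$ shows $\mu^\sigma\circ\mu=[\pm d]$, hence $\mu^\sigma=\pm\hat\mu$; in particular $C_d^\sigma=\ker\hat\mu=\mu(E[d])$. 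This compatibility is exactly what is needed so that a $\Q$-curve of degree $N$ (or the triples below) defines a genuine $\Q$-rational point on the relevant Atkin--Lehner quotient rather than merely a Galois-stable pair of geometric points. Since $(d,m)=1$, $\mu$ is injective on $E[m]$, so $\mu(C_m)$ and $C_m^\sigma$ are two cyclic subgroups of order $m$ of $E^\sigma[m]$; the two cases are whether they coincide.

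If $\mu(C_m)=C_m^\sigma$, I would observe that $(E,C_d,C_m)$ is a point of $Y_0(dm)$ and compute $w_d(E,C_d,C_m)$: transporting $(E/C_d,E[d]/C_d,(C_m+C_d)/C_d)$ along $\mu$ identifies it with $(E^\sigma,\mu(E[d]),\mu(C_m))=(E^\sigma,C_d^\sigma,C_m^\sigma)=(E,C_d,C_m)^\sigma$, using the previous paragraph and the case hypothesis. Hence $E$ corresponds to a $\Q$-rational point of $X_0(dm)/w_d$, as in the description of such points recalled before the proposition.

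If $\mu(C_m)\neq C_m^\sigma$, I would form
\[
\theta\colon E'\xrightarrow{\ \hat\varphi\ } E\xrightarrow{\ \mu\ } E^\sigma\xrightarrow{\ \varphi^\sigma\ } E^\sigma/C_m^\sigma=(E')^\sigma,
\]
of degree $dm^2$, and show it is cyclic; then $E'$ is a non-CM $\Q$-curve of degree $dm^2$ (non-CM because isogenous to $E$), so it corresponds to a $\Q$-rational point of $X_0^+(dm^2)$, and $E$ is $m$-isogenous to $E'$ via $\varphi$, which is the second alternative. For cyclicity: for a prime $\ell\neq m$, both $\hat\varphi$ and $\varphi^\sigma$ restrict to isomorphisms on $\ell$-power torsion, so $\ker\theta[\ell^\infty]\cong\ker\mu[\ell^\infty]=C_d[\ell^\infty]$ is cyclic, and these contribute a cyclic group of order $d$. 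The $m$-part of $\ker\theta$ has order $m^2$, so it is either $\Z/m^2\Z$ or $E'[m]$, and it is the latter iff $E'[m]\subseteq\ker\theta$. Here the key computation is $\hat\varphi(E'[m])=C_m$: the left side is an order-$m$ subgroup of $E[m]$, and $\varphi(\hat\varphi(E'[m]))=[m]_{E'}(E'[m])=0$ places it inside $\ker\varphi=C_m$, so the two order-$m$ groups are equal. Consequently $\theta(E'[m])=\varphi^\sigma(\mu(C_m))$, which is nonzero because $\mu(C_m)\neq C_m^\sigma=\ker\varphi^\sigma$ are distinct order-$m$ subgroups and $m$ is prime. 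So $E'[m]\not\subseteq\ker\theta$, the $m$-part of $\ker\theta$ is cyclic, and since $(d,m)=1$, $\theta$ is cyclic of degree $dm^2$.

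The dichotomy and the first case are essentially formal given the Atkin--Lehner description preceding the proposition, so I expect the real content to be concentrated in the second case: identifying $\hat\varphi(E'[m])$ correctly and carrying out the torsion bookkeeping for $\theta$, together with the (uniform, $\End=\Z$) argument that a $\Q$-curve of degree $N$ really gives a point of $X_0^+(N)(\Q)$. A secondary check is that the conclusions genuinely split along the dichotomy, which the computation above makes transparent, since $\theta$ is cyclic precisely when $\mu(C_m)\neq C_m^\sigma$.
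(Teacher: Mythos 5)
Your proof is correct and follows essentially the same route as the paper: the same dichotomy according to whether $\mu(C_m)=C_m^\sigma$, the same identification with a point of $X_0(dm)/w_d$ in the first case, and in the second case the same curve $E'=E/C_m$ exhibited as a $\Q$-curve of degree $dm^2$ (the paper phrases the $dm^2$-isogeny via the auxiliary quotient $E/(C_m+C_d)$ rather than as the composite $\varphi^\sigma\circ\mu\circ\hat\varphi$, but it is the same isogeny). You additionally verify two points the paper leaves implicit, namely $C_d^\sigma=\ker\hat\mu$ via $\End(E)=\Z$ and the cyclicity of the degree-$dm^2$ isogeny, both of which are correct and worth recording.
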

\begin{proof}
All the elliptic curves and isogenies we will consider in  this proof will be assumed to be defined over $\Qbar$, and when saying that they are defined over $K$ we will always mean that $\Gal_K$ acts on their $\Qbar$-isomorphism class, as explained in the \Cref{rem:1}.

Suppose $E/\Qbar$ is a non-CM $\Q$-curve of degree $d$ which is defined over $K$, $C_d=\ker \mu$ where $\mu:E\rightarrow E^\sigma$ is a $d$-isogeny  and $C_{m}$ is a cyclic subgroup of order $m$ of $E$ defined over $K$. By \Cref{rem:1} it follows that $(E, C_d,C_{m})$ defines a point on $Y_0(dm)(K)$ and that $\mu$ is defined over $K$. Since $E$ is a $\Q$-curve of degree $d$, we have $E^\sigma=E/C_d$ and $C_d^\sigma=E[d]/C_d$. Now there are two possibilities: either $\mu(C_{m})=C_{m}^\sigma$ or $\mu(C_{m})\neq C_{m}^\sigma$.\\

\noindent\boxed{\mu(C_{m})=C_{m}^\sigma} In this case, by the discussion before the proposition, we see that $E$ corresponds to a rational point on $X_0(dm)/w_d$.\\

\noindent\boxed{\mu(C_{m})\neq C_{m}^\sigma} Denote by $E_1:=E/C_{m}$, by $E_2:=E^\sigma/(C_m)^\sigma$ and by $E_3:=E/(C_m+C_d)=E^\sigma/(\mu(C_m))$. Since any isogeny is a composition of a multiplication-by-$n$ map $[n]$, for some $n$, and a cyclic isogeny, and since $m$ is prime and the isogeny $f:E_2\rightarrow E_3$ is of degree $m^2$, it follows that $f$ is either $[m]$ or is cyclic.  As $\mu(C_m)\neq C_m^\sigma$, the isogenies $f_2:E_1\rightarrow E_2$ and $f_3:E_1\rightarrow E_3$ are different. Since there exists a unique (up to sign) isogeny between two non-CM elliptic curves \cite[Lemma A.1]{CremonaNajmanQCurve} it follows that $j(E_2)\neq j(E_3)$ and hence $f$ cannot be $[m]$, so we conclude that $f$ is cyclic and that $E_2$ and $E_3$ are $m^2$-isogenous. From V\'elu's formulae \cite{Velu71} it follows that the coefficients of the defining equation of $E/C_m$ are given by rational functions in the coefficients of the defining equation of $E$ and the non-zero points in $C_m$, so it follows that $(E/C_m)^\sigma=E^\sigma/C_m^\sigma $ and hence $E_1=E_2^\sigma$. Since $E_1$ and $E_3$ are $d$-isogenous, it follows that $E_1$ is a $\Q$-curve of degree $dm^2$.
\end{proof}

\begin{proposition}\label{prop:qcurves2}
Let $E/\Qbar$ be a non-CM $\Q$-curve of odd degree $d$ defined over a quadratic field $K$ having in addition an $4$-isogeny defined over $K$. Then either $E$ or a curve isogenous over $K$ to $E$ corresponds to a rational point on $X_0^+(2d)$, $X_0^+(16d)$ or $X_0(4d)/w_d$.
\end{proposition}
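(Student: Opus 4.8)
The plan is to run the argument of Proposition~\ref{prop:qcurves} again, the one new feature being that the kernel $C_4$ of the $4$-isogeny is not of prime order: it contains a unique subgroup $C_2$ of order $2$, so the case analysis has to be carried out in two stages. Keep the notation $\mu\colon E\to E^\sigma$ for the degree-$d$ isogeny with $\ker\mu=C_d$, so that $E^\sigma=E/C_d$ and $C_d^\sigma=E[d]/C_d$; let $C_4$ be the kernel of the given $4$-isogeny and $C_2\subset C_4$ its subgroup of order $2$. Since $d$ is odd, $\mu$ and $\hat\mu$ are injective on $E[2^\infty]$ and $\hat\mu\circ\mu=[d]$ fixes all $2$-power torsion. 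I would split into three cases, which are exhaustive because two distinct cyclic subgroups of order $4$ intersect either in their common order-$2$ subgroup or trivially: (i)~$\mu(C_4)=C_4^\sigma$; (ii)~$\mu(C_2)=C_2^\sigma$ but $\mu(C_4)\neq C_4^\sigma$; (iii)~$\mu(C_2)\neq C_2^\sigma$ (which forces $\mu(C_4)\cap C_4^\sigma=0$). These will land on $X_0(4d)/w_d$, $X_0^+(2d)$, and $X_0^+(16d)$ respectively.

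Case (i) is immediate: $w_d$ is a genuine Atkin--Lehner involution of $X_0(4d)$ since $(4,d)=1$, and the point $x=(E,C_4,C_d)\in Y_0(4d)$ satisfies $w_d(x)=x^\sigma$, so, exactly as in the discussion preceding Proposition~\ref{prop:qcurves}, $E$ corresponds to a rational point on $X_0(4d)/w_d$.

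In cases (ii) and (iii) the plan is to feed the configuration through the ``$\mu(C_m)\neq C_m^\sigma$'' quotient construction of Proposition~\ref{prop:qcurves}. In case (iii) I would first forget $C_4$ and apply that proposition with $m=2$ to $(E,C_2)$; since $\mu(C_2)\neq C_2^\sigma$ this is its second alternative, giving that $E/C_2$ is a $\Q$-curve of degree $4d$ ($K$-isogenous to $E$). One then checks that the residual $2$-isogeny $C_4/C_2$ on $E/C_2$ is still incompatible with this degree-$4d$ structure, so a second pass of the same quotient step multiplies the degree by another factor of $4$ and exhibits $E/C_4$ as a $\Q$-curve of degree $16d$, i.e.\ a rational point on $X_0^+(16d)$. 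In case (ii) the curve $E/C_2$ is, via the isogeny induced by $\mu$ (which lands on $E^\sigma/\mu(C_2)=E^\sigma/C_2^\sigma=(E/C_2)^\sigma$ precisely because $\mu(C_2)=C_2^\sigma$), a $\Q$-curve of degree $d$ carrying the extra, now incompatible, $2$-isogeny $C_4/C_2$; but here the two order-$2$ subgroups that enter the construction (the images of $\mu(C_4)$ and of $C_4^\sigma$ in $(E/C_2)^\sigma$) already sit in a cyclic group of order $4$ built on the \emph{common} subgroup $C_2^\sigma$, so the degree is raised only by a factor of $2$, and one obtains a curve $K$-isogenous to $E$ that is a $\Q$-curve of degree $2d$, hence a rational point on $X_0^+(2d)$. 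In cases (ii) and (iii) the $\Q$-curve exhibited is $E/C_2$ or $E/C_4$, a quotient of $E$ by a $K$-rational subgroup, which accounts for the ``$E$ or a curve isogenous over $K$ to $E$'' in the statement.

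The step I expect to be the main obstacle is precisely the bookkeeping in cases (ii) and (iii): one must repeatedly decide whether a composition of two $2$-isogenies is a cyclic $4$-isogeny or instead equals $[2]$ composed with a single $2$-isogeny — equivalently whether the intervening order-$2$ subgroups meet trivially or lie in a common cyclic $4$-group — since this dichotomy is exactly what separates the outputs $X_0^+(16d)$ and $X_0^+(2d)$, and one must likewise track the exact cyclic degree of every $\Q$-curve produced and confirm that the residual $2$-isogeny stays incompatible at each stage. All of this reduces to linear algebra over $\Z/4\Z$ and $\Z/8\Z$, using injectivity of $\mu$ on $E[2^\infty]$ and $\hat\mu\circ\mu=[d]$, but it is the only genuinely delicate part of the proof.
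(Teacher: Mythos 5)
Your three-way case split (according to whether $\mu(C_4)\cap C_4^\sigma$ has order $4$, $2$ or $1$) is exactly the paper's, and your cases (i) and (iii) reach the paper's conclusions: (i) is the $w_d$-invariance argument verbatim, and in (iii) the paper quotients by $C_4$ in one step (mimicking the second case of \Cref{prop:qcurves}) to exhibit $E/C_4$ as a $\Q$-curve of degree $16d$, whereas you iterate the $m=2$ quotient step twice; note that the second iteration is not literally an application of \Cref{prop:qcurves}, since after the first pass the degree is $4d$ and $\gcd(2,4d)\neq 1$, so the cyclicity bookkeeping you flag really does have to be done by hand there --- but the conclusion is correct.

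The genuine gap is in case (ii). You claim that because the images of $\mu(C_4)$ and $C_4^\sigma$ share the order-$2$ subgroup $C_2^\sigma$, ``the degree is raised only by a factor of $2$,'' producing a $\Q$-curve of degree $2d$. This cannot happen: for a non-CM curve $F$ with $F\sim F^\sigma$, the group $\operatorname{Hom}(F,F^\sigma)$ is infinite cyclic, so the unique cyclic isogeny $F\to F^\sigma$ has degree equal, modulo squares, to that of any other curve in the isogeny class; since $E$ has degree $d$ and $2d\cdot d=2d^2$ is not a square, no curve isogenous to $E$ is a $\Q$-curve of degree $2d$. Concretely, the two images $\mu(C_4)/C_2^\sigma$ and $C_4^\sigma/C_2^\sigma$ are \emph{distinct} order-$2$ subgroups of $E^\sigma/C_2^\sigma$, so the cyclic isogeny joining $E^\sigma/\mu(C_4)$ to $E^\sigma/C_4^\sigma$ goes up by $2$ and back down by $2$ and has degree $4$, not $2$; the output of case (ii) is therefore a $\Q$-curve of degree $4d$ (the paper takes $E_1/S_1$ with $E_1=E/(2C_4)$ and $S_1,S_2$ the two non-obvious rational order-$2$ subgroups, using $\mu(S_1)=S_2^\sigma$, $\mu(S_2)=S_1^\sigma$), i.e.\ a rational point on $X_0^+(4d)$. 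You are not entirely to blame for aiming at $2d$: the statement of the proposition lists $X_0^+(2d)$, but this is evidently a typo for $X_0^+(4d)$ --- the paper's own proof produces $X_0^+(4d)$, and the application to $n=92$ invokes $X_0^+(92)=X_0^+(4\cdot 23)$, not $X_0^+(46)$. A proof that actually lands on $X_0^+(2d)$ is not repairable, so your case (ii) needs to be replaced by the degree-$4d$ computation above.
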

\begin{proof}
As in the proof of \Cref{prop:qcurves}, all the elliptic curves and isogenies we will consider here will be assumed to be defined over $\Qbar$, and when saying that they are defined over $K$ we will always mean that $\Gal_K$ acts on their $\Qbar$-isomorphism class.

Suppose $E/K$ is a non-CM $\Q$-curve of degree $d$, $C_d=\ker \mu$ where $\mu:E\rightarrow E^\sigma$ is a $d$-isogeny and $C_{4}$ is a cyclic subgroup of order $4$ of $E$ defined over $K$. By \Cref{rem:1} $(E, C_d,C_{4})$ defines a point on $Y_0(4d)(K)$ and $\mu$ is defined over $K$.\\


\noindent\boxed{\mu(C_{4})=C_{4}^\sigma} In this case, by the discussion before \Cref{prop:qcurves}, we see that $E$ corresponds to a rational point on $X_0(4d)/w_d$.\\

\noindent\boxed{\mu(C_{4})\cap C_{4}^\sigma=\{O\}} Using the same arguments as in the $\mu(C_{m})\neq C_{m}^\sigma$ case in \Cref{prop:qcurves}, one proves that $E$ is isogenous to an elliptic curve corresponding to a point on $X_0^+(16d)$.\\

\noindent\boxed{\mu(C_{4})\cap C_{4}^\sigma=2(C_4)^\sigma=2\mu(C_4)} Let $E_1=E/(2C_4)$; it is $d$-isogenous to $E_1^\sigma$ and has all $3$ of its subgroups of order 2 defined over $K$. Indeed, as $\mu$ is defined over $K$ and of degree coprime to two, the subgroup of $E^\sigma$ generated by $\mu(C_4)$ and $C_4^\sigma$ is defined over $K$ and isomorphic to $C_2\times C_4$. The quotient of this latter group by the subgroup $\mu(C_4)\cap C_4^\sigma$ is isomorphic to $C_2 \times C_2$ and thus the two-torsion of $E_1^\sigma$ is defined over $K$. Finally, using again that $\mu$ is defined over $K$ and has degree coprime to two, we find the same is true for $E_1$. One of the subgroups of $E_1[2]$ of order $2$ is $E[2]/(2C_4)$. Call the other two $S_1$ and $S_2$. Since $\mu(C_{4})\neq C_{4}^\sigma$, it follows that $S_1^\sigma =\mu(S_2)$ and $S_2^\sigma =\mu(S_1)$. It follows $E_1/S_1$ is $4d$-isogenous to $(E_1)^\sigma/(S_1)^\sigma= (E_1)^\sigma/\mu(S_2)$, and hence corresponds to a rational point on $X_0^+(4d)$.

\end{proof}

\section{Results for $n\in\{62,69,92,94\}.$}
\label{sec:3}

We will use the following result of Momose, which we state in a weaker form, which is sufficient for our purposes.
\begin{thm}[{\cite[Theorem 0.1]{Momose1987}}]
\label{thm:momose}
Let $N$ be a composite number. If $N$ has a prime divisor $p$ such that $X_0(p)$ is of positive genus and such that $J_0(p)(\Q)$ is finite, then $X_0^+(N)$ has no non-cuspidal non-CM $\Q$-rational points.
\end{thm}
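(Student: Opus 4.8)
The plan is to reduce the statement to a known finiteness result about $\Q$-rational points on $X_0^+(p)$ for a single prime $p$, exploiting the isogeny $X_0(N) \to X_0(p)$ induced by the divisibility $p \mid N$, which is compatible with the Atkin--Lehner involutions $w_N$ on $X_0(N)$ and $w_p$ on $X_0(p)$. Concretely, suppose $x \in X_0^+(N)(\Q)$ is a non-cuspidal non-CM point. Lifting to $X_0(N)$, the point $x$ corresponds (in the notation of \Cref{section:qcurves}, with $n=N$ and $d=N$, so there is no extra cyclic structure) to a pair $\{(E,C_N),(E/C_N, E[N]/C_N)\}$ which is $\Gal_\Q$-invariant; that is, either $E$ is defined over $\Q$, or $E$ is a $\Q$-curve of degree $N$ defined over some quadratic field $K$ with $E^\sigma \cong E/C_N$. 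In either case, the cyclic $N$-isogeny $\mu\colon E \to E/C_N$ induces, by taking the subgroup of $\ker\mu$ of order $p$, a cyclic $p$-isogeny; since $(E,C_N)$ and its conjugate map to $(E, C_p)$ and $(E^\sigma, \mu(C_N)[p]^{\text{comp}})$ compatibly with $w_p$, we obtain a non-cuspidal point $y \in X_0^+(p)(\Q)$, and $E$ is non-CM iff $y$ is non-CM.

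The key input is then Mazur's theorem on $X_0^+(p)$ together with the hypothesis on $J_0(p)$: since $X_0(p)$ has positive genus and $J_0(p)(\Q)$ is finite, the same holds for the quotient $J_0^+(p)(\Q)$ (which is a quotient of $J_0(p)(\Q)$, hence also finite), and one knows $X_0^+(p)$ then has only finitely many rational points, all of which are CM points or cusps. (For the primes $p$ that actually arise in our applications — divisors of $62, 69, 92, 94$ with $X_0(p)$ of positive genus and $J_0(p)(\Q)$ finite, so $p \in \{23, 29, 31, 47\}$ and the like — this is the classical determination of the rational points on $X_0^+(p)$; the finiteness of $J_0(p)(\Q)$ guarantees one is in a case covered by the literature.) Feeding this back: the point $y \in X_0^+(p)(\Q)$ is a cusp or a CM point, and since $x$ is assumed non-cuspidal, $y$ is non-cuspidal, hence CM; but then $j(E)$ is a CM $j$-invariant, contradicting that $x$ is a non-CM point. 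Therefore no such $x$ exists.

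I would organize the write-up as: (1) recall the moduli interpretation of $X_0^+(N)(\Q)$ from \Cref{section:qcurves}; (2) construct the forgetful/quotient map $X_0(N) \to X_0(p)$ and check it descends to $X_0^+(N) \to X_0^+(p)$ — this just amounts to verifying $w_p$ is the restriction of $w_N$'s action on the $p$-part, or more cleanly, that the diagram of moduli problems commutes; (3) invoke the finiteness/classification of $X_0^+(p)(\Q)$ under the stated hypothesis; (4) conclude by the non-CM contradiction. The main obstacle is step (2): making precise that a $\Gal_\Q$-stable pair of $N$-isogenous curves really does push down to a $\Gal_\Q$-stable pair of $p$-isogenous curves, i.e. that the Atkin--Lehner structure is respected under $p \mid N$. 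This is routine but needs care because $w_N$ acts on the full cyclic $N$-structure while we only retain the $p$-part; the cleanest route is to note $w_N = w_p \cdot w_{N/p}$ when $(p, N/p) = 1$ (which fails if $p^2 \mid N$, so one must instead argue directly that $\mu$ restricted to the $p$-torsion of its kernel is the relevant $p$-isogeny and that conjugation commutes with this restriction). Since the theorem is quoted from \cite{Momose1987} in a deliberately weakened form, I would in practice simply cite it; the sketch above is how one reconstructs the argument.
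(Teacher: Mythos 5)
This statement is quoted verbatim from \cite[Theorem 0.1]{Momose1987}; the paper offers no proof, so the only question is whether your reconstruction would actually establish it. It would not: the step you rely on as the ``key input'' is false. You claim that positive genus of $X_0(p)$ together with finiteness of $J_0(p)(\Q)$ forces $X_0^+(p)(\Q)$ to be finite and to consist of cusps and CM points. But $X_0^+(p)$ can have genus $0$, and it does so precisely for the primes that matter in this paper: for $p=23,31,47$ the curve $X_0(p)$ is hyperelliptic with $w_p$ as its hyperelliptic involution, so $X_0^+(p)\simeq\PP^1$ and $X_0^+(p)(\Q)$ is infinite, containing non-cuspidal non-CM points. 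More generally, classifying the rational points of $X_0^+(n)$ is exactly the open problem (Elkies's conjecture \cite{Elkies2004}) recalled in the introduction, so there is no classical classification to invoke. The content of Momose's theorem is precisely that compositeness of $N$ buys something that is unavailable for $X_0^+(p)$ itself.

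There is also a gap in your step (2). A non-cuspidal point of $X_0^+(N)(\Q)$ lifts to $(E,C_N)\in X_0(N)(K)$ with $E^\sigma\cong E/C_N$; restricting attention to the $p$-part still only gives $E^\sigma\cong E/C_N$, \emph{not} $E^\sigma\cong E/C_p$, and $E/C_N$ is merely $(N/p)$-isogenous to $E/C_p$. So the pair $\{(E,C_p),(E/C_p,E[p]/C_p)\}$ need not be Galois-stable, and you do not obtain a rational point of $X_0^+(p)$: the degeneracy map $X_0(N)\to X_0(p)$ does not descend to the Atkin--Lehner quotients in the way your sketch needs. What one does get is the Galois-stable pair $\{(E,C_p),(E^\sigma,C_p^\sigma)\}$, i.e.\ a rational point of the symmetric square $X_0(p)^{(2)}$, and Momose's actual argument is a Kamienny--Mazur-type formal immersion argument on $X_0(p)^{(2)}$ mapping into $J_0(p)$, where the finiteness of $J_0(p)(\Q)$ is used directly on that Jacobian (this is an ancestor of the relative symmetric Chabauty method used elsewhere in this paper). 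Since the theorem is deliberately quoted in weakened form, citing it is the right move; but the sketch as written does not reconstruct its proof.
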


\begin{thm}
\label{prop:69}
\begin{itemize}
    \item[a)]  If $(E,C)$ is a non-cuspidal point on $X_0(62)(K)$, where $K$ is a quadratic field, then $K=\Q(\sqrt{-3})$, and either $j(E)=54000$ or $j(E)=0$ and $E$ has a point of order $2$ over $K$. 
    

  \item[b)] If $(E,C)$ is a non-cuspidal point on $X_0(69)(K)$, where $K$ is a quadratic field, then $j(E)=-2^{15}$ and $K=\Q(\sqrt{-11})$.
      \item[c)] If $(E,C)$ is a non-cuspidal point on $X_0(92)(K)$, where $K$ is a quadratic field, then $K=\Q(\sqrt{-7})$ and $j(E)=-3375$ or $j(E)=16581375$.
      
      \item[d)]  There are no non-cuspidal quadratic points on $X_0(94)$.
\end{itemize}
\end{thm}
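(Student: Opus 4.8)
The plan is to handle all four parts by a single strategy specializing the discussion of \Cref{section:qcurves}. For each $n\in\{62,69,92,94\}$ write $n=dm$ with $d=31,23,23,47$ respectively; then $X_0(d)$ is hyperelliptic, $(d,m)=1$, and -- since $d$ is prime -- the hyperelliptic involution of $X_0(d)$ is $w_d$. Given a noncuspidal point $x=(E,C)$ of $X_0(n)(K)$ with $K$ quadratic, the curve $E$ carries a cyclic $d$-isogeny defined over $K$ and hence defines a noncuspidal quadratic point on $X_0(d)$. As $J_0(d)(\Q)$ is finite for $d\in\{23,31,47\}$, the results of \cite{BruinNajman15} classify all quadratic points of $X_0(d)$: each is either one of finitely many explicitly tabulated exceptional points, or a pullback under $X_0(d)\to X_0^+(d)$ of a rational point, in which case $E$ is a $\Q$-curve of degree $d$.

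Two of the resulting cases are finite computations. For the exceptional points, I would run through the finite list of known exceptional $j$-invariants on $X_0(d)$, together with the quadratic field over which the corresponding curve and its $d$-isogeny are defined, and check directly whether that curve also admits the extra $m$-isogeny needed to lift it to a point of $X_0(n)(K)$. For CM curves, I would carry out a finite CM analysis: the large prime factor of $n$ (one of $31,23,23,47$) sharply restricts the discriminants of CM orders admitting a cyclic $n$-isogeny rational over a quadratic field, leaving a short explicit list of CM $j$-invariants to test against the moduli interpretation of $X_0(n)$. Together these two computations produce exactly the CM curves recorded in (a), (b) and (c), and nothing for $n=94$.

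It remains to treat the case where $E$ is a \emph{non-CM} $\Q$-curve of degree $d$ carrying an extra $m$-isogeny over $K$. For $n\in\{62,69,94\}$, $m$ is prime, so \Cref{prop:qcurves} applies; for $n=92$, $d=23$ is odd, so \Cref{prop:qcurves2} applies. In either case, $E$ corresponds to a rational point of the quotient $X_0(n)/w_d$, or $E$ is isogenous over $K$ to a curve corresponding to a rational point of one of $X_0^+(124),X_0^+(207),X_0^+(188)$ (for $n=62,69,94$) or of $X_0^+(46),X_0^+(368)$ (for $n=92$). Each such $X_0^+(N)$ has $N$ composite and divisible by a prime $p\in\{23,31,47\}$ for which $X_0(p)$ has positive genus and $J_0(p)(\Q)$ is finite, so by \Cref{thm:momose} it has no noncuspidal non-CM rational point; since a curve isogenous to a non-CM curve is non-CM, this alternative cannot occur. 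One is thus reduced to determining the rational points of $X_0(n)/w_d$ for $n\in\{62,69,92,94\}$: I would write down explicit models of these quotients, compute their genera and the Mordell--Weil arithmetic of their Jacobians where needed, enumerate all their rational points (directly when the genus is $\le 1$, and by Chabauty together with a Mordell--Weil sieve otherwise), and verify that every rational point is a cusp or corresponds to a CM $j$-invariant, so that no non-CM quadratic point on $X_0(n)$ can arise. Combining this with the exceptional and CM analyses yields the four statements.

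The hard part will be the rational-point computations on the quotients $X_0(n)/w_d$: several of these have genus $\ge 2$ (up to about $5$), so one needs good explicit models and a Chabauty/Mordell--Weil-sieve argument to pin down their rational points exactly. A secondary subtlety is the moduli bookkeeping: \Cref{prop:qcurves} and \Cref{prop:qcurves2} only assert that \emph{some} curve isogenous to $E$ corresponds to a rational point, so after enumerating those points one must trace the isogenies back and check precisely which of them come from an honest cyclic $n$-isogeny on $E$ defined over a quadratic field, and over which quadratic field -- which is exactly what pins down the fields $\Q(\sqrt{-3})$, $\Q(\sqrt{-11})$ and $\Q(\sqrt{-7})$ appearing in parts (a)--(c).
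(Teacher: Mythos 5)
Your proposal follows essentially the same route as the paper: project to the hyperelliptic $X_0(p)$ for $p\in\{23,31,47\}$, invoke \cite{BruinNajman15} to split into exceptional points (checked by hand for the extra small isogeny) and $\Q$-curves of degree $p$, use \Cref{prop:qcurves} and \Cref{prop:qcurves2} together with \Cref{thm:momose} to rule out the $X_0^+(pm^2)$-type alternatives for non-CM curves, and finish by finding the rational points of $X_0(n)/w_p$ plus a separate CM enumeration. The only factual slip is your assessment of the ``hard part'': the quotients $X_0(62)/w_{31}$, $X_0(69)/w_{23}$, $X_0(92)/w_{23}$ and $X_0(94)/w_{47}$ are all rank-zero elliptic curves (with $4$, $2$, $3$ and $2$ rational points respectively, identified via \cite{gonz-bielliptic}), not curves of genus up to $5$, so no Chabauty or Mordell--Weil sieve is needed there --- which is exactly the advertised advantage of this method over the symmetric Chabauty approach used for the other levels.
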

\begin{proof}
All the values $n$ for which we consider the modular curves $X_0(n)$ are of the form $n=mp$, where $m=2,3$ or $4$ and $p=23, 31$ or $47$. Let $K$ be a quadratic field, $(E,C)$ a $K$-rational point on $Y_0(n)$, where $E/K$ is an elliptic curve and $C$ is a $\Gal_K$-invariant cyclic subgroup of $E$ of order $n$. It follows that $y = (E, mC)$ is a $K$-rational point on $Y_0(p)$. By the results of \cite{BruinNajman15}, we know that $E$ is either a $\Q$-curve of degree $p$ or $y$ is one of the \textit{exceptional} points listed in the appropriate table in \cite{BruinNajman15}.

For each of the exceptional points $y$ listed in the appropriate table in \cite{BruinNajman15} we construct an elliptic curve with $j$-invariant $j(y)$ and determine whether it admits an $m$-isogeny. For $m=2$ this is done by checking whether the curve has a $2$-torsion point and for $m=3$ it is done by checking whether the division polynomial $\psi_3$ (see \cite[Chapter 3.2]{Washington_EC} for the definition) has a linear factor; this is a necessary and sufficient condition for the existence of a $3$-isogeny. We obtain that for $n=69$ this occurs if and only if $K=\Q(\sqrt{-11})$ and $j(y)=-2^{15}$, i.e. when the elliptic curve has complex multiplication by $\Z[\frac{1+\sqrt{-11}}{2}]$, and it does not occur for the exceptional points in the cases $n=62$ and $94$. In the remaining case $n=92$ with $m=4$, all computation can be avoided by noting that $(E,2C)$ defines a point on $Y_0(2p)(K)$, and hence $E$ is necessarily a $\Q$-curve of degree $p$ by \cite[Table 13]{BruinNajman15}.

It remains to consider the non-exceptional points, which are $\Q$-curves of degree $p$. Suppose first that $E$ does not have CM. Let first $n=62,69$ or $94$. By \Cref{prop:qcurves}, either $E$ corresponds to a rational point on $X_0(n)/w_p$ or is isogenous to a non-CM elliptic curve corresponding to a rational point on $X_0^+(pm^2)$. The latter is impossible by \Cref{thm:momose}. For $n=92$, by \Cref{prop:qcurves2} we obtain that an elliptic curve isogenous to $E$ corresponds to a rational point on $X_0(92)/w_{23}$, $X_0^+(92)$ or $X_0^+(368)$, the last two again being impossible by \Cref{thm:momose}. 
By \cite{gonz-bielliptic}, $X_0(69)/w_{23}$ is the elliptic curve \lmfdbec{69}{a}{2}, $X_0(94)/w_{47}$ is the elliptic curve \lmfdbec{94}{a}{2} and $X_0(92)/w_{23}$ is the elliptic curve \lmfdbec{92}{b}{2}. In the first two cases the elliptic curve has $2$ rational points and in the final case it has $3$ rational points, which is in all cases the same as the number of rational cusps. On the other hand, the curve $X:=X_0(62)/w_{31}$ is by \cite{gonz-bielliptic} the elliptic curve \lmfdbec{62}{a}{4}. It has $4$ rational points, 2 of which are cusps, while the remaining two correspond to elliptic curves defined with $j$-invariant 54000 and 0, which give one point each. The pullbacks of both of these two non-cuspdial rational points on $X$, with respect to the quotient map $X_0(62)\rightarrow X$, are defined over $\Q(\sqrt{-3})$. Note that for elliptic curves with $j(E)=0$ only those with a $\Q(\sqrt{-3})$-rational 2-torsion point correspond to a quadratic point on $X_0(62)$, i.e. the elliptic curves $y^2=x^3+d$ for which $d$ is a cube in $\Q(\sqrt{-3})$. 

It remains to check the existence of quadratic CM points on all the $X_0(n)$. From \cite[Corollary 8.9.c)]{CGPS21} we conclude that $X_0(n)$ has no CM points for $n=94$. Using data provided to us by the authors of \cite{CGPS21}, which can be obtained using \cite[Theorem 3.7]{CGPS21}, we find that the only quadratic CM points for the values $n$ that we haven't already found are the ones with $j(E)=-3375$ or $j(E)=16581375$ over $K=\Q(\sqrt{-7})$ for $n=92$.
\end{proof}

\section{Obtaining models for $X_0(n)$ and their quotients} \label{sec:4}
For the remaining values of $n$, it will be necessary to obtain models for $X_0(n)$ and their quotients by Atkin-Lehner involutions. We use the approach of Ozman and Siksek \cite[Section 3]{OzmanSiksek19}, but we use a different basis for the space $S_2(n)$ of weight 2 cuspforms of level $n$. 

We select an Atkin-Lehner operator $w_d$, with $d \mid n$ and $d > 1$, which we will be using to get the quotient $C: = X_0(n)/w_d$. Then we choose a basis for $S_2(n)$ such that the matrix of $w_d$ is diagonal with all the diagonal elements equal to $1$ or $-1$ in that basis. This reduces the time needed to obtain a model for $X_0(n)$ and especially reduces the time needed to compute $C= X_0(n)/w_d$. A method to compute a model for $C$ that will often work in our setting, i.e. when $C$ is an elliptic curve, is to take the variables of $X_0(n)$ on which $w_d$ acts non-trivially and compute relations between them. If we succeed in obtaining a model for $C$ in this way, then the map $X_0(n)\rightarrow C$ is just the projection map.



We obtain the quotient map using this approach only for $n=101$ as the default \texttt{Magma} function was fast enough for all other $n$, except for $n=131$, where we use a different approach which avoids computing the quotient completely.

In addition, the models we got using our basis had shorter equations and generally smaller coefficients.

We remove the part in the Ozman-Siksek code which computes a Gröbner basis since it is only used to potentially simplify the equations and didn't seem to give us noticeable gains, while it made the computations considerably slower.

With these adjustments, we were able to obtain models for $X_0(n)$ and the quotients $X_0(n)/w_d$ quickly. For example, the computation of the model of $X_0(131)$ along with the quotient using the diagonal basis took $3.560$ seconds. On the other hand, when using the  basis of $S_2(n)$ that \texttt{Magma} returns by default, the computation of the model for $X_0(131)$ took $489.579$ seconds and it was not possible to compute the quotient map in reasonable time. The files comparing those computations can be found in our code repository. 

\section{Determining the Mordell-Weil groups of $J_0(n)(\Q)$}
\subsection{Determining the ranks}

Here we will prove that the ranks of $J_0(n)(\Q)$ for our values of $n$ are as listed in Table \ref{table1}. Some of that data is already known: all the Jacobians of $X_0(n)$ of rank $0$ over $\Q$ are determined in \cite[Theorem 3.1]{Deg3Class}.

\begin{proposition}
\label{rankN}
The values of $\rk(J_0(n)(\Q))$ are as listed in Table \ref{table1}.
\end{proposition}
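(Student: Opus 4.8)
The plan is to verify, for each $n$ in Table \ref{table1}, that $\rk(J_0(n)(\Q))$ equals the claimed value, by combining isogeny decomposition of $J_0(n)$ into modular abelian varieties with known analytic/algebraic rank data. First I would use the decomposition $J_0(n) \sim \prod_i A_f^{e_i}$ up to $\Q$-isogeny, where the product runs over the Galois-conjugacy classes of newforms $f$ of level dividing $n$ (with appropriate multiplicities coming from oldforms), as implemented in \texttt{Magma}'s modular symbols machinery. Since isogenous abelian varieties have the same Mordell--Weil rank, it suffices to compute $\sum_i e_i \cdot \rk(A_f(\Q))$. For the values $n$ with $\rk(J_0(n)(\Q)) = 0$ — namely $n = 60, 62, 69, 94, 95, 119$ — the vanishing is already recorded in \cite[Theorem 3.1]{Deg3Class}, so nothing new is needed there; I would simply cite that result.

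For the positive-rank cases $n \in \{79, 83, 89, 92, 101, 131\}$, the strategy splits into an upper bound and a lower bound. For the upper bound, I would appeal to the fact that each newform factor $A_f$ has analytic rank computable (the order of vanishing of $L(A_f, s)$ at $s=1$ can be determined rigorously, or read off from the LMFDB), and by Kato's theorem (together with the work of Kolyvagin--Logachev in the higher-dimensional case, valid for $A_f$ of analytic rank $\le 1$) the algebraic rank is bounded above by the analytic rank. In each of our cases the relevant factors all have analytic rank $0$ or $1$, so Kato/Kolyvagin--Logachev gives $\rk(A_f(\Q)) \le \ord_{s=1} L(A_f,s)$ and hence an upper bound on $\rk(J_0(n)(\Q))$ matching Table \ref{table1}. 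For the lower bound, for each positive-rank factor I would exhibit an explicit point of infinite order: since the factor with analytic rank $1$ is in each case an elliptic curve (one checks this from the decomposition — e.g. $X_0^+(79)$, $X_0^+(83)$, etc.\ are elliptic), a rational point of infinite order on that elliptic curve, pulled back appropriately, certifies rank $\ge 1$. Combining the two bounds pins down the rank exactly.

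Concretely, the steps in order are: (1) for each $n$, compute the $\Q$-isogeny decomposition of $J_0(n)$ into newform abelian varieties with multiplicities; (2) for the rank-$0$ values, cite \cite[Theorem 3.1]{Deg3Class}; (3) for the rank-$\ge 1$ values, identify which newform factor(s) contribute positive analytic rank and verify each such factor is an elliptic curve of analytic rank exactly $1$ and all other factors have analytic rank $0$; (4) invoke Kato and Kolyvagin--Logachev (applicable since all analytic ranks are $\le 1$) for the upper bound $\rk(J_0(n)(\Q)) \le \sum_i e_i \ord_{s=1}L(A_f,s)$; (5) produce an explicit infinite-order rational point on each positive-rank elliptic factor for the matching lower bound. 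All of this is routine to run in \texttt{Magma}, and the code is included in the repository.

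The main obstacle, such as it is, is really just bookkeeping rather than mathematics: ensuring that the isogeny decomposition is handled correctly including oldform multiplicities (a newform of level $m \mid n$ with $m < n$ contributes with multiplicity equal to the number of divisors of $n/m$), and confirming that in every one of the positive-rank cases the analytic rank of each simple factor is genuinely $\le 1$ so that the Kolyvagin--Logachev / Kato input is legitimately available — if some factor had analytic rank $\ge 2$ we would lack an unconditional upper bound. Since the genera here are at most $11$, the factors are small-dimensional and this verification is quick; a quick scan of the LMFDB data for these levels confirms no factor has analytic rank exceeding $1$.
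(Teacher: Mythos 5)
Your proposal is correct and follows essentially the same route as the paper: decompose $J_0(n)$ up to isogeny into newform abelian varieties via modular symbols and read off the rank of each factor from its analytic rank using Kolyvagin--Logachev, all factors having analytic rank $\le 1$. The only (harmless) difference is that your step (5) of exhibiting explicit infinite-order points is unnecessary, since in the analytic-rank-one case Kolyvagin--Logachev (with Gross--Zagier) already gives the equality $\rk(\mathcal{A}_f(\Q)) = [K_f:\Q]$ rather than merely an upper bound, which is exactly how the paper concludes.
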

\begin{proof}
We can use the modular symbols package in \texttt{Magma} developed by W. Stein in \cite{stein00, stein07}.

If $L(\mathcal{A}_f, 1) \neq 0$ for some representative $f$ of some Galois orbit of Hecke eigenforms, the Kolyvagin-Logachev theorem \cite{KolyvaginLogachev89} tells us that $\rk(\mathcal{A}_f(\Q)) = 0$. Furthermore, if $L(\mathcal{A}_f, 1) = 0$ for some representative $f$ of some Galois orbit of Hecke eigenforms and the order of vanishing of $L(f, 1)$ is $1$, the Kolyvagin-Logachev theorem tells us that $\rk(\mathcal{A}_f) = [K_f : \Q]$, where $K_f$ is the Hecke eigenvalue field of $f$, which is easily computed.

All our calculations fall into one of the two aforementioned categories and by summing all $\rk(\mathcal{A}_f)$ we get $\rk(J_0(n)(\Q))$ and check that the values in the Table \ref{table1} are correct.
\end{proof}

\subsection{Determining the torsion}

Here we will describe the methods we used in our attempt to determine $J_0(n)(\Q)_{tors}$ for the values of $n$ from Table \ref{table1}. For the prime values of $n$ we will use the following result of Mazur:

\begin{thm}[{\cite[Theorem (1)]{mazur77}}]
\label{torsP}
For a prime number $p$, the number of elements in $J_0(p)(\Q)_{tors}$ is equal to the numerator of $(p-1)/12$ in minimal form and $J_0(p)(\Q)_{tors}$ is generated by the difference of two cusps.
\end{thm}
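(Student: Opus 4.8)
The statement is Mazur's theorem from \cite{mazur77}, so my plan is to recall the architecture of his proof, which splits into a lower bound that is classical and comparatively soft, and an upper bound that is the whole point of the Eisenstein-ideal machinery.

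\textbf{Lower bound.} The curve $X_0(p)$ has exactly two cusps, $0$ and $\infty$, both $\Q$-rational, so $C := \langle\,[(0)-(\infty)]\,\rangle$ is a subgroup of $J_0(p)(\Q)_{\mathrm{tors}}$. First I would exhibit a modular unit on $X_0(p)$ — a power of $\Delta(pz)/\Delta(z)$, or more efficiently a suitable $\eta$-quotient — whose divisor is an integer multiple of $(0)-(\infty)$; this already shows the class is torsion and bounds its order. For the exact value $n := \mathrm{num}((p-1)/12)$ I would either sharpen the choice of $\eta$-quotient, or pass to the special fibre at $p$: by Deligne--Rapoport the model of $X_0(p)$ over $\Z_p$ has special fibre two copies of the $j$-line meeting transversally at the supersingular points, and the resulting component group of the Néron model of $J_0(p)$ at $p$ is cyclic of order $n$, with the cuspidal class specializing isomorphically onto it. Hence $C$ is cyclic of order exactly $n$.

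\textbf{Upper bound (the hard half).} The goal is $J_0(p)(\Q)_{\mathrm{tors}} = C$, in three steps. \emph{Step 1: rational torsion is Eisenstein.} Let $\mathbb{T}$ be the Hecke algebra on $J_0(p)$ and $I\subseteq\mathbb{T}$ the Eisenstein ideal generated by $T_\ell-\ell-1$ for primes $\ell\neq p$ together with $U_p-1$ (equivalently $1+w_p$, since $U_p=-w_p$ in prime level). Given a torsion point $P\in J_0(p)(\Q)$, pick $\ell\nmid 2p\,\ord(P)$; reduction mod $\ell$ is injective on $\langle P\rangle$ and commutes with $\mathbb{T}$, the reduced point lies in $J_0(p)(\F_\ell)$ hence is fixed by $\mathrm{Frob}_\ell$, and the Eichler--Shimura relation $T_\ell = \mathrm{Frob}_\ell + V$ with $\mathrm{Frob}_\ell V = [\ell]$ forces $T_\ell$ to act on it as $\ell+1$; varying $\ell$ shows $I$ kills $P$ (the $U_p$-relation coming from the bad-reduction analysis at $p$, or by bootstrapping from the fact that $1+w_p$ kills $C$). \emph{Step 2: identify the support.} Every maximal ideal $\mathfrak{m}\supseteq I$ has residue characteristic $\ell\mid n$ because $\mathbb{T}/I\cong\Z/n\Z$, so the rational torsion is concentrated at these Eisenstein primes. \emph{Step 3: bound the $\mathfrak{m}$-primary part.} One must show that for each Eisenstein prime $\mathfrak{m}$ of residue characteristic $\ell$, the $\mathfrak{m}$-primary part of $J_0(p)(\Q)_{\mathrm{tors}}$ is exactly the $\ell$-part of $C$. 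The mechanism is that $J_0(p)[\mathfrak{m}]$ is two-dimensional over $\mathbb{T}/\mathfrak{m}$ and, as a $\Gal_\Q$-module, is built from a copy of $\Z/\ell$ (containing the cuspidal class) and a copy of $\mu_\ell$ (the Shimura subgroup), so only the cuspidal line is $\Q$-rational; ruling out higher $\mathfrak{m}$-power torsion and the non-semisimple possibilities requires Mazur's theorem that $\mathbb{T}_\mathfrak{m}$ is Gorenstein, together with a study of the $\mathfrak{m}$-divisible group and of the $\ell$-torsion as a finite flat group scheme over $\Z_p$ via the explicit model.

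\textbf{Where the difficulty lies.} The lower bound and Step 1 are essentially formal once the framework (modular units, Néron models, Eichler--Shimura, the Hecke algebra) is in place; the genuine obstacle is Step 3 — Mazur's Gorenstein theorem and the local study at $p$ that pin the $\mathfrak{m}$-primary torsion down exactly. Once $C = J_0(p)(\Q)_{\mathrm{tors}}$ is established, the generation statement is immediate since $C$ is cyclic on $[(0)-(\infty)]$. For the present paper the result is of course simply quoted from \cite{mazur77}.
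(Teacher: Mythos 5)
The paper does not prove this statement at all: it is imported verbatim as \cite[Theorem (1)]{mazur77}, so there is no internal argument to compare yours against. What you have written is a summary of the architecture of Mazur's own proof, and as such it is accurate: the lower bound via the $\eta$-quotient $\Delta(pz)/\Delta(z)$ and the identification of the cuspidal group with the cyclic component group of order $\mathrm{num}((p-1)/12)$ at $p$ is the classical half; the upper bound via the Eisenstein ideal $I=(T_\ell-\ell-1,\ 1+w_p)$, the Eichler--Shimura argument showing rational torsion is annihilated by $I$, the isomorphism $\mathbb{T}/I\cong\Z/n\Z$ localizing the problem at Eisenstein primes, and the Gorenstein/multiplicity-one analysis of $J_0(p)[\mathfrak{m}]$ as an extension built from a constant piece and a $\mu$-type piece (the Shimura subgroup) is faithful to Chapters II--III of \cite{mazur77}. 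Your technical assertions check out, including $U_p=-w_p$ at prime level and the choice $\ell\nmid 2p\ord(P)$ needed for injectivity of reduction.

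Two caveats. First, this is a roadmap, not a proof: Step 3, which you correctly identify as the genuine content, is only named --- the Gorenstein property of $\mathbb{T}_\mathfrak{m}$, the dimension count for $J_0(p)[\mathfrak{m}]$, the finite flat group scheme analysis over $\Z_p$, and the special treatment required at $\ell=2$ are each substantial theorems, and nothing in your outline substitutes for them. Second, for the purposes of this paper none of this is needed; the authors (correctly) treat the result as a black box, and your final sentence acknowledges as much. So the proposal is a correct description of a proof that lives entirely outside the paper, rather than an alternative to anything the paper does.
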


\Cref{rankN} and \Cref{torsP} now determine the Mordell-Weil group of all $J_0(p)(\Q)$ for prime $p \in \{79, 83, 89, 101, 131\}$. 

Denote by $\CC_n$ the subgroup of $J_0(n)$ generated by linear equivalence classes of differences of cusps. This subgroup is called the \textit{cuspidal subgroup} of $J_0(n)$. The \textit{rational cuspidal subgroup} is defined to be $\CC_n(\Q):=\CC_n \cap J_0(n)(\Q)$. The Manin-Drinfeld theorem states that $\CC_n(\Q)\subseteq J_0(n)(\Q)_{\tors}$. Ogg conjectured and Mazur proved (cf. \Cref{torsP}) that $\CC_n(\Q)=J_0(n)(\Q)_{\tors}$ for prime $n$. The \textit{Generalized Ogg Conjecture}, which is still open, says that $\CC_n(\Q)=J_0(n)(\Q)_{\tors}$ for all positive integers $n$. For a nice overview of the current status of the proven cases of the Generalized Ogg Conjecture, see \cite{Yoo21}.

For composite $n \in \{60, 95, 119\}$, we will use the fact that $J_0(n)(\Q)_{tors}$ injects into $J_0(n)(\F_p)$ for an odd prime $p$ of good reduction \cite[Appendix]{katz81}. 

\begin{proposition}
\label{prop:torsion}
The following holds:
\begin{itemize}
    \item[a)] $J_0(60)(\Q)_{tors} \cong \Z/4\Z \times (\Z/24\Z)^3$,
    \item[b)] $\Z/6\Z \times \Z/180\Z \leq J_0(95)(\Q)_{tors} \leq (\Z/2\Z)^2 \times \Z/6\Z \times \Z/180\Z$,
    \item[c)] $J_0(119)(\Q)_{tors} \cong \Z/8\Z \times \Z/288\Z$.
\end{itemize}
\end{proposition}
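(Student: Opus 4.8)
The plan is to compute $J_0(n)(\Q)_{\tors}$ for $n \in \{60, 95, 119\}$ by combining the injection $J_0(n)(\Q)_{\tors} \hookrightarrow J_0(n)(\F_p)$ for primes $p$ of good reduction (i.e. $p \nmid n$), valid since $p$ is odd, with a lower bound coming from explicit rational torsion divisors. First I would produce an explicit lower bound on $J_0(n)(\Q)_{\tors}$: the cuspidal subgroup is rational-by-$\Gal$-orbit and the differences of rational cusps (and more generally norms of Galois orbits of cusps) give a concrete subgroup of $J_0(n)(\Q)_{\tors}$; one can compute the group it generates using the modular symbols / cuspidal divisor class group machinery in \texttt{Magma} (or read off known results of Ligozat-type for these levels). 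This yields the groups appearing on the left-hand sides of a), b), c) — in particular $\Z/4\Z \times (\Z/24\Z)^3$ for $n=60$, $\Z/6\Z \times \Z/180\Z$ for $n=95$, and $\Z/8\Z \times \Z/288\Z$ for $n=119$.

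Next I would compute $J_0(n)(\F_p)$ for a small collection of good primes $p$ and intersect the resulting constraints. Concretely, for each such $p$ the order $\#J_0(n)(\F_p)$ is obtained from the characteristic polynomial of Frobenius acting on $J_0(n)$, which factors according to the decomposition of $S_2(\Gamma_0(n))$ into Hecke eigenform orbits, all available via the modular symbols package; the group structure of $J_0(n)(\F_p)$ can also be extracted, or at least enough of its invariant factors. Reducing the torsion injection modulo several primes $p$ and taking the gcd of the exponents / the intersection of the possible abelian group structures pins down $J_0(n)(\Q)_{\tors}$ from above. For $n = 60$ and $n = 119$ I expect two or three well-chosen primes to force the upper bound to match the cuspidal lower bound exactly, giving the stated isomorphisms in a) and c). For $n = 95$ the reductions apparently only narrow the group to lie between $\Z/6\Z \times \Z/180\Z$ and $(\Z/2\Z)^2 \times \Z/6\Z \times \Z/180\Z$, hence the inequality in b) rather than an equality; I would simply record the best lower bound (cuspidal group) and the best upper bound (intersection over the chosen primes) and check these coincide with the claimed bounds.

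The main obstacle is the $n = 95$ case: there the mod-$p$ information is insufficient to decide whether a $2$-torsion part of $(\Z/2\Z)^2$ is genuinely rational, so one cannot close the gap with this elementary method alone — which is presumably why the proposition only asserts a sandwich there rather than an equality, and any later argument in the paper must be robust to this ambiguity. A secondary technical point is ensuring the modular symbols computations of $\#J_0(n)(\F_p)$ and of the cuspidal subgroup are done with a consistent integral structure so that the lower and upper bounds are directly comparable as subgroups of the same abelian group; this is routine but must be handled carefully, and all of it is verified in the accompanying code repository cited in the introduction. I would also double-check that the primes chosen for reduction are coprime to $n$ (e.g. avoid $2, 3, 5$ for $n = 60$, avoid $5, 19$ for $n = 95$, avoid $7, 17$ for $n = 119$) so that $J_0(n)$ has good reduction and Katz's injectivity result applies.
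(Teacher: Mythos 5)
Your proposal matches the paper's proof in both strategy and detail: the lower bound comes from the rational cuspidal subgroup computed in \texttt{Magma} (via the Ozman--Siksek code), and the upper bound comes from injecting the torsion into $J_0(n)(\F_p)$ for a few odd primes of good reduction (the paper uses $p=23$ for $n=60$; $p=3,7,11$ for $n=95$; $p=3,5$ for $n=119$), with exactly the residual $2$-torsion ambiguity you predict forcing the sandwich in case b). No substantive differences to report.
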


\begin{remark}
In b), by the Generalized Ogg Conjecture, we expect $J_0(95)(\Q)_{tors}$ to be equal to the lower bound, but what we prove will already be good enough for our purposes. 
\end{remark}

\begin{proof}[Proof of \Cref{prop:torsion}]
For $n=60$ we use the code of Ozman and Siksek from \cite[Section 5]{OzmanSiksek19} to get that rational cuspidal subgroup of $J_0(60)(\Q)$ is isomorphic to $\Z/4\Z \times (\Z/24\Z)^3$. We also get that $J_0(60)(\Q)_{tors}$ is isomorphic either to $\Z/4\Z \times (\Z/24\Z)^3$ or to $\Z/4\Z \times (\Z/24\Z)^2 \times \Z/48\Z$. We then compute that $J_0(60)(\F_{23})$ doesn't have an element of order $48$, so we must have $J_0(60)(\Q)_{tors} \cong \Z/4\Z \times (\Z/24\Z)^3$.

For $n$ squarefree, every cusp of $X_0(n)$ is defined over $\Q$ (this is well known in this case; more generally the field of definition of the cusps can be determined using \cite[Section 2]{LeFournLemos} for any modular curve), in particular the rational cuspidal subgroup coincides with the full cuspidal subgroup.  

For $n=95$, we compute that the cuspidal group is isomorphic to $\Z/6\Z \times \Z/180\Z$, so clearly $\Z/6\Z \times \Z/180\Z \leq J_0(95)(\Q)_{tors}$. We also get the following local information:
\begin{itemize}
    \item $J_0(95)(\F_{3}) \cong (\Z/2\Z)^2 \times \Z/60\Z \times \Z/180\Z$,
    \item $J_0(95)(\F_{7}) \cong (\Z/2\Z)^5 \times \Z/18\Z \times \Z/90\Z \times \Z/900\Z$,
    \item $J_0(95)(\F_{11}) \cong \Z/4\Z \times \Z/12\Z \times \Z/36\Z \times \Z/1658340\Z$.
\end{itemize}
Reduction modulo $7$ tells us that $J_0(95)(\Q)_{tors}$ can't have $(\Z/4\Z)^2$ as a subgroup. Reduction modulo $11$ tells us that $J_0(95)(\Q)_{tors}$ can't have $(\Z/5\Z)^2$ as a subgroup. Since we already know that $\Z/6\Z \times \Z/180\Z \leq J_0(95)(\Q)_{tors}$, we can also conclude that $J_0(95)(\Q)_{tors} \leq (\Z/2\Z)^2 \times \Z/6\Z \times \Z/180\Z$.

For $n=119$, we compute $J_0(119)(\F_{3})$ and $J_0(119)(\F_{5})$ and we get that $J_0(119)(\Q)_{tors}$ has at most $(\#J_0(119)(\F_{3}), \#J_0(119)(\F_{5})) = 2304$ elements. We can easily compute that differences of cusps generate a group isomorphic to $\Z/8\Z \times \Z/288\Z$, so we can conclude $J_0(119)(\Q)_{tors} \cong \Z/8\Z \times \Z/288\Z$.
\end{proof}

As a byproduct of our results we prove the Generalized Ogg Conjecture for some of our values of $n$.

\begin{proposition}
\label{ogg}
The Generalized Ogg Conjecture is true, i.e, $\CC_n(\Q)=J_0(n)(\Q)_{tors}$ for $n=60,62,92,94,119$.
\end{proposition}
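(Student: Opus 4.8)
The plan is to split the five values according to what the earlier sections already give us. For $n \in \{60, 119\}$ there is nothing left to prove: in the proof of \Cref{prop:torsion} we showed that $J_0(n)(\Q)_{tors}$ is isomorphic to the subgroup generated by the classes of differences of the cusps, and this subgroup is by definition $\CC_n(\Q)$. Since the Manin--Drinfeld theorem gives $\CC_n(\Q)\subseteq J_0(n)(\Q)_{tors}$ and we have just matched the (finite) orders, the two groups coincide. So only $n \in \{62, 92, 94\}$ require a genuine argument.

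For those three values the strategy is the usual squeeze: a lower bound on $J_0(n)(\Q)_{tors}$ coming from cusps and an upper bound coming from reduction modulo small primes. First I would compute the rational cuspidal subgroup $\CC_n(\Q)$ explicitly, as the subgroup of $J_0(n)(\Q)$ generated by divisor classes of differences of the cusps of $X_0(n)$, using the same routines as in \Cref{prop:torsion}. Next I would pick a small finite set of odd primes $p$ with $p \nmid n$ and compute the finite abelian groups $J_0(n)(\F_p)$ with the modular symbols and point-counting machinery in \texttt{Magma}. Then, by the injectivity of reduction on torsion at odd primes of good reduction \cite[Appendix]{katz81}, $\#J_0(n)(\Q)_{tors}$ divides $\gcd_p \#J_0(n)(\F_p)$. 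If for a suitable choice of primes this gcd equals $\#\CC_n(\Q)$, then the chain $\CC_n(\Q) \subseteq J_0(n)(\Q)_{tors} \subseteq (\text{a group of that order})$ forces $\CC_n(\Q) = J_0(n)(\Q)_{tors}$, which is exactly the claim.

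The main obstacle is the last step: the gcd of the $\#J_0(n)(\F_p)$ over any reasonable set of primes can strictly exceed $\#\CC_n(\Q)$ --- this is precisely what stops us from pinning down $J_0(95)(\Q)_{tors}$ in \Cref{prop:torsion}, hence $95$ is absent from the present statement. If a stubborn prime power survives in the gcd, I would sharpen the upper bound by using the full group structure of $J_0(n)(\F_p)$, not merely its order: for instance, ruling out $(\Z/m\Z)^2 \subseteq J_0(n)(\Q)_{tors}$ by exhibiting a prime $p$ at which $J_0(n)(\F_p)$ contains no such subgroup, exactly as $(\Z/4\Z)^2$ and $(\Z/5\Z)^2$ were excluded in the proof of \Cref{prop:torsion}, and, if necessary, studying the $\ell$-primary part of $J_0(n)(\Q)$ through the isogeny decomposition $J_0(n) \sim \prod_f \mathcal{A}_f$ into modular abelian varieties used in \Cref{rankN}. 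In line with the Generalized Ogg conjecture, the expectation is that for $n = 62, 92, 94$ a couple of well-chosen primes already bring the gcd down to $\#\CC_n(\Q)$, so the proof reduces to exhibiting them and recording the computation.
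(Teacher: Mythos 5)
Your treatment of $n=60$ and $119$ matches the paper exactly: both cases are already settled in \Cref{prop:torsion}, where the cuspidal subgroup was computed and shown to account for all of the torsion. For $n=62,92,94$, however, there is a genuine gap. The paper states explicitly that for these three values reduction modulo primes of good reduction is \emph{insufficient} to obtain a sharp upper bound on $J_0(n)(\Q)_{tors}$ --- not merely the gcd of the orders $\#J_0(n)(\F_p)$, but the full constraint that the torsion injects into each $J_0(n)(\F_p)$ as an abstract abelian group. Your proposed fallbacks (exploiting the group structure of $J_0(n)(\F_p)$ to exclude subgroups like $(\Z/m\Z)^2$, or looking at the isogeny decomposition $J_0(n)\sim\prod_f\mathcal{A}_f$) stay entirely within this reduction framework, and the isogeny decomposition in particular gives no direct control on torsion since it is only an isogeny. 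So the squeeze you describe would not close for these three values.

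The missing ingredient is the Eichler--Shimura relation applied to rational torsion: for any $P\in J_0(n)(\Q)_{tors}$ and any prime $q\nmid n$ one has $(T_q-\langle q\rangle-q)P=0$ (this is \cite[Proposition 2.3]{DKSS}). Imposing this annihilation condition on top of the reduction data cuts the upper bound down to the cuspidal subgroup and yields $J_0(62)(\Q)\simeq\Z/5\Z\times\Z/120\Z$, $J_0(92)(\Q)\simeq\Z/11\Z\times\Z/22\Z\times\Z/66\Z$ and $J_0(94)(\Q)\simeq\Z/23\Z\times\Z/92\Z$, matching $\CC_n(\Q)$ in each case. Without some input of this Hecke-theoretic type, the argument cannot be completed for $n=62,92,94$.
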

\begin{proof}
The cases $n=60$ and $119$ have already been proved in \Cref{prop:torsion}. In all the remaining cases reduction modulo primes $p$ of good reduction is insufficient to get a sharp upper bound on the torsion subgroup, so we also apply the fact that $(T_q-\diam{q}-q)P=0$ for any $P\in J_0(n)(\Q)_{tors}$ for an odd prime $q\nmid n$, see \cite[Proposition 2.3]{DKSS}, which gives a sharp upper bound for $n=62,92,94.$
We obtain:
\begin{itemize}
    \item $J_0(62)(\Q)\simeq \Z/5\Z\times \Z/120\Z$.
    \item $J_0(92)(\Q)\simeq \Z/11\Z\times \Z/22\Z \times \Z/66 \Z$.
    \item $J_0(94)(\Q)\simeq \Z/23\Z \times \Z/92 \Z$.
\end{itemize}
\end{proof}

For $n=69$, as in the case $n=95$ already discussed in \Cref{prop:torsion}, we were unable to obtain a sharp upper bound on $J_0(n)(\Q)_{tors}$.

\section{The Relative symmetric Chabauty method}
\label{sec:relsymCha}

Our main tools for determining quadratic points on $X_0(n)$ for $n \in \{60, 79, 83, 89, 95, 101,$ $119, 131\}$ are symmetric Chabauty and relative symmetric Chabauty combined with the Mordell-Weil sieve. 

We will be building upon the work of Box in \cite{Box19}, which in turns builds on the work of Siksek \cite{siksek}. In \cite[Theorem 2.1.]{Box19} Box uses a criterion of Siksek for a known point of $X^{(2)}(\mathbb{Q})$ to be the only point in its residue class modulo a prime $p$ of good reduction. However, there might be infinitely many quadratic points on $X$. This will indeed happen when we have a degree $2$ map $X \rightarrow C$ and $C$ has infinitely many rational points.

To circumvent this problem, Box, again building on work of Siksek \cite{siksek}, gives a criterion for a known point of $X^{(2)}(\mathbb{Q})$ to be the only point in its residue class modulo prime $p$ of good reduction, up to pullbacks from $C(\mathbb{Q})$ in \cite[Theorem 2.4.]{Box19}. One can then, if needed, combine the information acquired from the aforementioned two theorems for different values of $p$ by using the Mordell-Weil sieve as described in \cite[Section 2.5]{Box19}.

\label{BoxIns}
The input for Box's method is: 
\begin{enumerate}
    \item a model for a non-hyperelliptic projective curve $X(\mathbb{Q})$,
    \item a set of known rational effective degree $2$ divisors on $X$;
    \item a set $\Gamma$ of matrices defining Atkin-Lehner operators on $X$ such that $C =X/\Gamma$; in all our cases $\Gamma$ will have only one element (not counting the identity), 
    \item a set of degree $0$ divisors that generate a subgroup $G$ of $J(X)(\mathbb{Q})$ of finite index;
    \item an integer $I$ such that $I \cdot J(X)(\mathbb{Q}) \subseteq G$; \label{uvjet:I}
    \item a degree $2$ effective divisor $D_{pull}$ on $X$ that is a pullback of a rational point on $C$, used to embed $X^{(2)}$ into $J(X)$.
\end{enumerate}

 For Box's method to work, the following conditions need to be satisfied:
\begin{enumerate}
    \item $\rk(J(X)(\Q)) < g(X) - 1$,
    \item $\rk(J(X)(\Q)) = \rk(J(C)(\Q))$.
\end{enumerate}

The first condition ensures that we can find at least two linearly independent differentials vanishing on $X^{(2)}(\mathbb{Q}_p) \cap \overline{J(\mathbb{Q})}$, see \cite[Section 2.2.1]{Box19}. The second condition ensures that we can find a suitable $I$ in \eqref{uvjet:I}, see \cite[Proposition 3.1.]{Box19}. It also helps us in finding annihilating differentials, see \cite[Lemma 3.4.]{Box19}. Also notice that Box proved \cite[Lemma 3.4. and Proposition 3.5.]{Box19} for the values of $n$ he considered, but it is clear that analogous proofs also work for all $n \in \{60, 79, 83, 89, 95, 101, 119, 131\}$ and for the Atkin-Lehner operators we will be using. Hence we can use the same method as Box for determining annihilating differentials.

For $Q\in X^{(2)}(\Q)$, let $\phi$ be the map sending $Q$ to $\phi(Q) = I\cdot[Q - D_{pull}] \in G$. For a $B \leq G$, $w \in G$, we call the set $w+B$ a $B$-coset represented by $w$. Suppose now that $Q\in X^{(2)}(\Q)$ is some unknown point. We start with $B_0\leq G$ and $W_0\subseteq G$ which satisfy $\bigcup_{w \in W_0}(w+B_0)= G$, (e.g. in some instances we choose $B_0:=G, W_0:=\{0\}$), from which it follows that $\phi(Q)\in \bigcup_{w \in W_0}(w+B_0)= G$. In the $i$-th step, for $i\geq 1$, after applying Chabauty and the Mordell-Weil sieve using some prime $p_i$, we create a new subgroup $B_i \leq G$ and a set $W_i$ of $B_i$-coset representatives, which satisfy $\phi(Q)\in \bigcup_{w \in W_i}(w+B_i)$. Using the information we obtained using Chabauty and the Mordell-Weil sieve, in each step we aim to shrink the set $W_i$, in the aim of getting $W_i=\emptyset$,  which would prove that there are no unknown points and hence our known points are equal to $X^{(2)}(\Q)$. For more details on how Chabauty and the Mordell-Weil sieve are applied see \cite[Section 2]{Box19} or the next section.

 We apply and, when needed, modify Box's method to describe all quadratic points on $X_0(n)$ for $n \in \{60, 79, 83, 89, 95, 101, 119, 131\}$. For composite values of $n$, we use Box's method with only a small change in how the models of $X_0(n)$ are obtained, which made some of the computations considerably faster.

 For prime values of $n < 131$, we will make some adjustments, described in the next section, that were already partially made in \cite[Sections 3 and 4]{BGG21}. For $n = 131$, we will make one substantial adjustment, which we describe in \Cref{sec:131}.

\section{Methods and computations for $n \in \{60, 79, 83, 89, 95, 101, 119, 131\}$}
\label{sec:comp}
Here we describe the methods and computations for $n \in \{60, 79, 83, 89, 95, 101, 119, 131\}$ which help us describe all quadratic points on those $X_0(n)$. For $n \in \{60, 95, 119\}$ we use Box's method as described in \Cref{sec:relsymCha} and the computations successfully determine the quadratic points on those $X_0(n)$. Therefore, in the rest of this section we will be describing the methods used for $n \in \{79, 83, 89, 101, 131\}$. The changes that we make to the method from \Cref{sec:relsymCha} will be based on Box's unpublished work on $X_0(79)$ and \cite{BGG21}. For prime values of $n$, using the same approach as in \Cref{sec:relsymCha} does not give us the desired results because we never seem to be able to get $W_i = \emptyset$. 

We improve on \cite{Box19} and Box's unpublished work by using the improved algorithms to obtain ``diagonal" (with respect to the action of $w_d$) models of $X_0(n)$, which makes our computations feasible, as explained in \Cref{sec:4}. In addition, for $n = 131$ for $I$ we use the operator $1-w'_{n}$, which seems to be a novel idea ($I$ has usually been chosen to be multiplication by an integer).

Notice that for all these values of $n$ we have $\rk(J_0(n)(\Q)) = 1$ and we have a degree $2$ quotient map $X_0(n) \rightarrow X_0^+(n)$, where $X_0^+(n)$ is an elliptic curve of rank $1$. Since Box's unpublished \texttt{Magma} file for $X_0(79)$ and the methods of \cite{BGG21} work a bit differently than the method described in \cite{Box19}, we will first describe that method, which we will call here the ``updated method" and then build upon it for larger values of $n$.

\subsection{Description of the updated method} \label{newmethod} The notation and input are the same as in \Cref{BoxIns}. Furthermore, for any object (point, divisor or divisor class) $M$ we denote by $\widetilde{M}$ the reduction of that object modulo $p$; it will always be clear from the context which prime this is. Let $X$ be a non-hyperelliptic curve of genus $g\geq 3$. For a prime $p > 2$ of good reduction for $X$, define the following mappings:
\begin{itemize}
    \item $\iota : X^{(2)}(\Q) \rightarrow J(X)(\Q)$, $\iota(P) = [P - D_{pull}]$;
    \item $\phi : X^{(2)}(\Q) \rightarrow G$, $\phi(P) = I \cdot [P - D_{pull}]$;
    \item $m : J(X)(\Q) \rightarrow G$, $m(A) = I \cdot A$;
    \item $red_p : J(X)(\Q) \rightarrow J(X)(\F_p)$, $red_p(A) =\widetilde{A}$;
    \item $h_p : G \rightarrow J(X)(\F_p)$, $h_p(A) = red_p(A)=\widetilde{A}$;
    \item $m_p : J(X)(\F_p) \rightarrow J(X)(\F_p)$, $m_p(\widetilde A) = I \cdot \widetilde A$;
\end{itemize}

Notice that the images of $m$ and $\phi$ really are in $G$ by \eqref{uvjet:I}. Also, $\iota$ is injective since $X$ is not hyperelliptic. These maps fit into a commutative diagram:
$$
\xymatrix{
&X^{(2)}(\Q) \ar[d]^{\iota} \ar[dr]^{\phi}\\
& J(X)(\Q) \ar[r]^{m} \ar[d]^{red_p} & G \ar[d]^{h_p} \\
& J(X)(\F_p) \ar[r]^{m_p} & J(X)(\F_p).}
$$

Assume there is some unknown point $Q \in X^{(2)}(\Q)$. As mentioned before, in Box's original method, described in \Cref{sec:relsymCha}, the goal was to get that $\phi(Q) \in \emptyset$, a contradiction. Here we aim to either get the same result, or, if that is not possible, obtain some information about what $Q$ has to look like (e.g. to get that $Q$ is a pullback of a rational point on $X_0^+(n)$ in our case of $X:=X_0(n)$); this additional information will allow us to solve the problem. 

 As in \Cref{sec:relsymCha}, in the $i$-th step we want to determine $B_i \leq G$ and $W_i \subseteq G$  such that $\phi(Q)$ is a member of some $B_i$-coset represented by some $w \in W_i$. We start with $i = 0$, $B_0 = G_{free}$ and $W_0 = G_{tors}$, where $G_{free}$ is the free part of $G$ and $G_{tors}$ is the torsion subgroup of $G$. Clearly, $\phi(Q)$ is a member of some $B_0$-coset represented by some $w \in W_0$, since $G = \bigcup_{w \in W_0}(w+B_0)$.

Now take some prime $p:=p_{i+1} > 2$ of good reduction for $X$. Assume we have determined $B_i$ and $W_i$ and now want to construct $B_{i+1}$ and $W_{i+1}$.  
We set $B_{i+1} = B_i \cap \ker(h_{p})$ and split each of the $B_i$-cosets represented by elements of $W_i$ into $B_{i+1}$-cosets. We then create the set $W_{i+1}$ of representatives of $B_{i+1}$-cosets we just produced. It follows that $h_{p}$ is constant on each of the $B_{i+1}$-cosets represented by some $w \in W_{i+1}$ (in particular this was the goal of choosing this $B_{i+1}$). Clearly, $\phi(Q)$ will be in some $B_{i+1}$-coset represented by some element of $W_{i+1}$ since $\bigcup_{w \in W_i}(w+B_{i}) = \bigcup_{w \in W_{i+1}}(w+B_{i+1})$.

This splitting is useful because 
$$h_p\left(\bigcup_{w \in W_{i+1}}(w+B_{i+1})\right)=h_{p}(W_{i+1}).$$
We now apply the Mordell-Weil sieve and the Chabauty method to eliminate some elements from $W_{i+1}$. Notice that $h_{p}(\phi(Q)) \in \im(m_{p})$, so we need to only consider those $w \in W_{i+1}$ such that $h_{p}(w) \in \im(m_{p})$. Let $H_{i+1} = h_{p}(W_{i+1}) \cap \im(m_{p})$. Since we know that $h_{p}(\phi(Q)) \in H_{i+1}$, 
it follows that $red_{p}(\iota(Q)) \in m_{p}^{-1}(H_{i+1})$.

We now give a criterion based on \cite[Theorem 2.1.]{Box19} which, if satisfied, will allow us to remove more elements from $W_{i+1}$. Take some $A_{p} \in m_{p}^{-1}(H_{i+1})$ and assume $red_{p}(\iota(Q)) = A_{p}$. Denote by $l(D)$ the dimension of the Riemann-Roch space of the divisor $D$. If we have $l(A_{p} + [\widetilde{D_{pull}}]) = 0$, then we get a contradiction immediately since we must have $l(A_{p} + [\widetilde{D_{pull}}]) = l([\widetilde{Q}]) > 0$ since $\widetilde{Q}$ is an effective degree $2$ divisor. If one of our known points $Q_{known} \in X^{(2)}(\Q)$ satisfies $red_{p}(\iota(Q_{known})) = A_{p}$ and fulfills the criterion given by \cite[Theorem 2.1.]{Box19} then $(red_{p} \circ \iota)^{-1}(A_{p})=\{Q_{known}\}$. Notice that $Q \neq Q_{known}$ because $Q$ is an unknown point.
If the criterion succeeds, that means that $Q_{known}$ is the only point in its residue disc modulo $p$. If we had $[\widetilde{Q_{known}} - \widetilde{D_{pull}}] = [\widetilde{Q} - \widetilde{D_{pull}}]$, that would imply $\widetilde{Q} = \widetilde{Q_{known}}$ since $X$ is non-hyperelliptic, which is a contradiction. Hence, if the criterion succeeds, we cannot have $red_{p}(\iota(Q)) = A_{p}$ for an unknown $Q$.

Clearly, if $w_h \in H_{i+1}$ and if $red_{p}(\iota(Q))$ can't equal any of the elements of $m_{p}^{-1}(w_h)$, then $h_{p}(\phi(Q)) \neq w_h$, hence $\phi(Q) \notin h_{p}^{-1}(w_h)$. That means we can remove the elements of $h_{p}^{-1}(w_h)$ from $W_{i+1}$  while still having $\phi(Q) \in \bigcup_{w \in W_{i+1}}(w+B_{i+1})$ satisfied.

To recapitulate, we have obtained $B_{i+1} \leq G$ and a set $W_{i+1}$ of $B_{i+1}$-coset representatives such that $\phi(Q)\in \bigcup_{w \in W_{i+1}}(w+B_{i+1})$ and
$$ \bigcup_{w \in W_{i+1}}(w+B_{i+1})\subseteq \bigcup_{w \in W_i}(w+B_{i}).$$

By repeating this for various primes $p$, it would be ideal to get $W_s = \emptyset$ for some $s$, which would imply that there are no unknown points in $X^{(2)}(\Q)$.

Unfortunately, we are unable to get $W_s = \emptyset$ when $X = X_0(n)$ and $n\in\{79,83,89,101,131\}$; this is of course not surprising as $X^{(2)}(\Q)$ is infinite in these cases. However, we get that for some $s$ both $B_s$ and $W_s$ contain only elements of the form $aD_n$, where $D_n$ is a pullback of a generator of $J_0^+(n)(\Q)$ that generates the free part of $G$. That means that for any $Q\in X^{(2)}(\Q)$ we have $\phi(Q) = I \cdot [Q - D_{pull}] = aD_n$, which will be very useful to us, as will be explained in more detail in the next subsections. Note that in the method described in this subsection we use only \cite[Theorem 2.1]{Box19} and do not use \cite[Theorem 2.4]{Box19}.

\subsection{Selecting $G$ and $I$} \label{GandI} We now describe how to select an appropriate $G$ and $I$ (see also \cite[Section 3.3.]{Box19}). Let $\rho_n : X_0(n) \rightarrow X_0^+(n)$ be the degree $2$ quotient map we get from $w_n$ and $(\rho_n)_{*}:J_0(n)\rightarrow  J_0^+(n)$ the induced (pushforward) map on $J_0(n)$. We have the following commutative diagram:

$$
\xymatrix{
& X_0(n) \ar[r]^{\iota_1} \ar[d]^{\rho_n} & J_0(n) \ar[d]^{(\rho_{n})_*} \\
& X_0^+(n) \ar[r]^{\iota_2} & J_0^+(n).}
$$

For all $n \in \{79, 83, 89, 101, 131\}$ we have that $J_0^+(n)(\Q)$ is an elliptic curve with Mordell-Weil group isomorphic to $\Z$. Let $P_n$ be a generator of $J_0^+(n)(\Q)$ and set $D_n = ((\rho_n)_*)^{*}(P_n)$. Let $T_n \in J_0(n)(\Q)$ be the divisor class of the difference of the two cusps, which is a generator of $J_0(n)(\Q)_{tors}$ (see \Cref{torsP}).

Suppose now $n\neq 131$, as for $n=131$ we will select different $G$ and $I$, see \Cref{sec:131}. Set $G = \langle D_n, T_n \rangle$. Now we can use \cite[Proposition 3.1.]{Box19} to conclude that $2 \cdot J_0(n)(\Q) \subseteq G$ so we can use $I = 2$. Let $w_n':J_0(n)\rightarrow J_0(n)$ be the map induced by $w_n$. Notice that $w_n'(D_n) = D_n$ since $D_n$ is a pullback and that $w_n'(T_n) = -T_n$ since $w_n$ swaps the cusps.

\begin{lem} \label{oneMinusW}
Let $n \in \{79, 83, 89, 101, 131\}$. Then for every $D \in J_0(n)(\Q)$ we have $(1 - w_n')(D) \in J_0(n)(\Q)_{tors}$.
\end{lem}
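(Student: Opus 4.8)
The plan is to reduce the statement to a computation in the rank-$1$ Mordell--Weil group $J_0(n)(\Q)$, using the fact that $w_n$ acts on $J_0(n)(\Q)$ with finite cokernel on the appropriate eigenspace. Concretely, for each $n\in\{79,83,89,101,131\}$ we have, by \Cref{rankN} and \Cref{torsP}, that $J_0(n)(\Q)$ has rank $1$ with torsion generated by $T_n$, the difference of the two cusps; and the quotient $J_0(n)^+(\Q)$ is an elliptic curve of rank $1$ with generator $P_n$. First I would record that $(\rho_n)_*\iota_1 = \iota_2\rho_n$ from the commutative diagram, so that $(\rho_n)_*$ is surjective on $\Q$-points up to torsion, and that $(1-w_n')$ kills the $w_n$-invariant part of $J_0(n)$ (by definition of the eigenspace decomposition over $\Q$, since $(1-w_n')(1+w_n')=1-(w_n')^2=0$ as $w_n$ is an involution). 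Hence $(1-w_n')(D)$ lies in the $(-1)$-eigenspace image, which is isogenous to the abelian variety complementary to $J_0(n)^+$.

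The key step is then to observe that $(1-w_n')$ maps $J_0(n)(\Q)$ into a subgroup that is, modulo torsion, of rank at most $\rk J_0(n)(\Q) - \rk J_0(n)^+(\Q) = 1 - 1 = 0$. Indeed, the free rank of the image of $(1-w_n')$ on $J_0(n)(\Q)$ equals the free rank of the $(-1)$-eigenspace part, which is $\rk J_0(n)(\Q) - \rk J_0(n)^+(\Q)$; since both ranks are $1$, this is $0$. Therefore $(1-w_n')(D)$ is a torsion point of $J_0(n)(\Q)$ for every $D$, which is exactly the claim.

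Alternatively — and this is probably the cleaner route to write up, and the one I would actually carry out in detail — I would argue directly on a generator. Since $J_0(n)(\Q) = \Z\cdot R_n \oplus J_0(n)(\Q)_{\tors}$ for some point $R_n$ of infinite order, it suffices to show $(1-w_n')(R_n)$ is torsion. Pushing forward: $(\rho_n)_*(1-w_n')(R_n) = (\rho_n)_*R_n - (\rho_n)_*w_n'(R_n)$. Because $\rho_n\circ w_n = \rho_n$ on the curve (as $w_n$ generates the deck group of $\rho_n$), the induced maps satisfy $(\rho_n)_*\circ w_n' = (\rho_n)_*$, so this difference is $0$ in $J_0(n)^+(\Q)$. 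Thus $(1-w_n')(R_n) \in \ker(\rho_n)_*$. Now $\ker(\rho_n)_*$, intersected with $J_0(n)(\Q)$, is finite: its free rank is $\rk J_0(n)(\Q) - \rk\big((\rho_n)_*J_0(n)(\Q)\big)$, and since $(\rho_n)_*$ has finite cokernel in $J_0(n)^+(\Q)$ (it is surjective up to isogeny), that second rank is $\rk J_0(n)^+(\Q) = 1 = \rk J_0(n)(\Q)$. Hence $\ker(\rho_n)_* \cap J_0(n)(\Q)$ is torsion, and $(1-w_n')(D)\in J_0(n)(\Q)_{\tors}$ for all $D$.

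The main obstacle is purely bookkeeping: being careful about the distinction between the pushforward $w_n'$ and pullback normalizations of the Atkin--Lehner involution on $J_0(n)$, and making sure the identity $(\rho_n)_*\circ w_n' = (\rho_n)_*$ is stated with the correct map (it is the pushforward that is compatible with $(\rho_n)_*$, which matches the notation $(\rho_n)_* = ((\rho_n)_*)$ used in \Cref{GandI}). Everything else is a rank count that is already justified by \Cref{rankN} together with the rank-$1$-ness of $X_0^+(n)$ recorded in the same subsection; no further computation is needed beyond possibly naming an explicit infinite-order generator $R_n$, which is not actually required for the argument.
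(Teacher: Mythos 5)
Your second argument is correct and is the one worth writing up; it takes a genuinely different, though closely parallel, route from the paper. The paper's proof invokes \cite[Proposition 3.1.]{Box19} to write $2D = aD_n + bT_n$ with $D_n = ((\rho_n)_*)^{*}(P_n)$ a pullback (so $w_n'(D_n)=D_n$) and $T_n$ the cuspidal torsion generator (so $w_n'(T_n)=-T_n$), and then reads off $(1-w_n')(2D) = 2bT_n$, which is torsion. You instead apply $(\rho_n)_*$, use $(\rho_n)_*\circ w_n' = (\rho_n)_*$ to place $(1-w_n')(D)$ in $\ker\big((\rho_n)_*\big)\cap J_0(n)(\Q)$, and kill it by the rank count $\rk J_0(n)(\Q) = \rk J_0(n)^+(\Q) = 1$. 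One point to tighten: ``surjective up to isogeny'' does not by itself give finite cokernel on rational points; the clean justification is $(\rho_n)_*\circ \rho_n^{*} = [2]$ on $J_0(n)^+$, so $(\rho_n)_*(J_0(n)(\Q)) \supseteq 2J_0(n)^+(\Q)$, which has finite index by weak Mordell--Weil. Both proofs rest on the same rank equality from \Cref{rankN}; yours has the small advantage of never naming an explicit generator such as $D_n$ --- pleasant for $n=131$, where $D_{131}$ could not be computed --- while the paper's is essentially free because Box's Proposition 3.1 is already invoked in \Cref{GandI} to justify $I=2$. Your first, eigenspace-decomposition sketch is the same rank count in different clothing and can be dropped in favour of the pushforward argument.
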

\begin{proof}
By the information above, we know that $2D \in \langle D_n, T_n \rangle$, so $2D = aD_n + bT_n$. Hence $(1 - w_n')(2D) = (1 - w_n')(aD_n + bT_n) = 2bT_n$, which is of finite order. 
\end{proof}

\subsection{Quadratic points on $X_0(n)$ for $n \in\{79, 83, 101\}$} \label{easyP} 
Before proceeding any further, we mention again that $n=79$ was solved completely by Box in an unpublished \texttt{Magma} file. By performing the computations described in \Cref{newmethod}, we get that if there is an unknown $Q \in X_0(n)^{(2)}(\Q)$, then $\phi(Q) = 2 \cdot [Q - D_{pull}] = kD_n$ for some integer $k$. Hence $w_n'(2 \cdot [Q - D_{pull}]) = 2 \cdot [Q - D_{pull}]$ which means that $w_n'([Q - D_{pull}]) - [Q - D_{pull}]$ is of order at most $2$. Since $J_0(n)(\Q)$ doesn't have an element of order $2$ for $n \in \{79, 83, 101\}$, we have $w_n'([Q - D_{pull}]) = [Q - D_{pull}]$. Since $D_{pull}$ is a pullback and $X_0(n)$ is not hyperelliptic, we get $w_n(Q) = Q$. Since $Q = \{Q_1, Q_2\}$ (as a $2$-set), where $Q_i \in X_0(n)(\overline \Q)$, we conclude that either $w_n$ swaps $Q_1$ and $Q_2$ or it fixes them both. In the first case, $Q$ is a pullback of a rational point on $X_0^+(n)$. To deal with the second case, we compute the fixed points of $w_n$, which all correspond to CM curves (note that the fixed points could also have been determined using the methods of \cite{CGPS21}).

\subsection{Quadratic points on $X_0(89)$} \label{sec:89} Notice that $J_0(89)(\Q) \cong \Z \times \Z/22\Z$ has an element of order $2$, so the approach from \Cref{easyP} won't work without modification. We need more information and we get it by inspecting the possibilities for $\phi(Q)$ a bit more closely. By performing the computations from \Cref{newmethod}, we get that if there is an unknown $Q \in X_0(89)^{(2)}(\Q)$, then $\phi(Q) = 2 \cdot [Q - D_{pull}] = 2kD_{89}$ for some integer $k$. Hence $[Q - D_{pull}] - kD_{89}$ is of order at most $2$. If we have $[Q - D_{pull}] = kD_{89}$, then $w_n'([Q - D_{pull}]) = [Q - D_{pull}]$ and we continue as in \Cref{easyP}. The other possibility is that $[Q - D_{pull}] = kD_{89} + 11T_{89}$, but since $w_n'(T_{89}) = -T_{89}$ and $11T_{89} = -11T_{89}$, we would have $w_n'([Q - D_{pull}]) = [Q - D_{pull}]$ and again we can continue as in \Cref{easyP}. 

\subsection{Quadratic points on $X_0(131)$} \label{sec:131}


In this case we use a different approach; in particular, we will take $I$ to be the operator $1-w'_{131}$. The reason for doing this is that we had originally been unable to compute the quotient curve and hence could not proceed as in the other cases. Although we now  know how to compute the quotient map, we have left this case as it was. This is because it is possible that the strategy of applying the operator $I=1-w'_{131}$ might be useful in future applications. An additional benefit of using this operator $I$ is that we do not need to choose a finite index subgroup for $G$. We set $G = \langle T_{131} \rangle = J_0(131)(\Q)_{tors}$, where $T_{131}$ is the divisor class of the difference of the two cusps of $X_0(131)$ as before, and $I = 1 - w_{131}'$. By Lemma \ref{oneMinusW}, we know that $I \cdot J_0(131)(\Q) \subseteq G$. Since the action of $w_{131}'$ commutes with the reduction modulo a prime $p$ of good reduction, we now proceed as in \Cref{newmethod}.

After doing the computations described in \Cref{newmethod}, we get that if there is an unknown $Q \in X_0(131)^{(2)}(\Q)$, then $\phi(Q) = (1 - w_{131}') \cdot [Q - D_{pull}] = 0$, so $w_{131}'([Q - D_{pull}]) = [Q - D_{pull}]$. Now we can again continue as in \Cref{easyP}.

\section{Example of the sieving process for $n = 89$}
Here we describe the whole process for determining the quadratic points on $X_0(89)$ in more detail. As mentioned before, we know that $J_0(89)(\Q) \cong \Z \times \Z/22\Z$. Following the notation and method from \Cref{GandI}, we are able to find a finite index subgroup $G \leq J_0(89)(\Q)$, where $G = \langle D_{89}, T_{89} \rangle$. The group is of index at most $2$, so we use $I = 2$. We will use the primes $3$, $5$ and $7$ for the sieving method described in \Cref{newmethod}. We have $B_0 = \langle D_{89} \rangle$ and $W_0 = \{ k \cdot T_{89} : 0 \leq k \leq 21 \}$. After performing the sieve with $p = 3$, we get $B_1 = \langle 5D_{89} \rangle$ and $W_1 = \{0, D_{89}, 2D_{89}, -D_{89}, -2D_{89}, 2T_{89}, 20T_{89}\}$. After performing the sieve with $p = 5$, we get $B_2 = \langle 35D_{89} \rangle$ and $W_2 = \{0, k \cdot D_{89}: 0 \leq k \leq 34 \}$. After performing the sieve with $p = 7$, we get $B_3 = \langle 210D_{89} \rangle$ and $W_3 = \{0, 2k \cdot D_{89}: 0 \leq k \leq 104 \}$. That means that $\phi(Q) = 2 \cdot [Q - D_{pull}] = 2kD_{89}$ for an unknown $Q \in X_0(89)^{(2)}(\Q)$ and we can continue as described in \Cref{sec:89}.

This example is instructive because it shows that even if we sometimes don't reach an empty set with the sieve, we might be able to reach a set with a property which enables us to proceed.

\section{Final results and tables}
 Here we describe the results that we get for $n\in \{60, 79, 83, 89, 95, 101, 119, 131\}$. Recall that the results for $n\in\{62,69,92,94\}$ can be found in \Cref{sec:3}. We describe all quadratic points on those $X_0(n)$. By $C$ we always denote the quotient of $X_0(n)$ by an Atkin–Lehner involution which we used while doing the appropriate computations described in \Cref{sec:comp}. As usual, for elliptic curves $O$ always denotes the point at infinity. The column denoted by CM lists the discriminant of the order by which the elliptic curve has complex multiplication if it does, and NO if it does not have CM. 
 
 For $n\in\{62,69,92,94\}$ we did not need to do almost any computations and did not even need to compute a model for $X_0(n)$. Hence we do not display any information for these values of $n$.
 
 We list for each $n$ the elliptic curve $C$ which we use in our computations, where $b:X_0(n)\rightarrow C$ is of degree 2. We call the quadratic points on $X_0(n)$ which no not lie in $b^{-1}(C(\Q))$ \textit{exceptional}. Note that since a bielliptic curve might have multiple maps of degree 2 to elliptic curves, whether a point is exceptional depends on the choice of $C$ (or equivalently $b$). The points we list are exceptional with respect to our choice of $C$.
 
\subsection{$X_0(60)$}
Model for $X_0(60)$: \begin{align*} 
&x_0^2 + 2x_1^2 - x_2^2 + 6x_3^2 - 6x_3x_5 + x_4^2 + 4x_5^2 - x_6^2 = 0,\\
&x_0x_2 - x_1^2 + x_3^2 + x_3x_5 = 0,\\
&x_0x_3 - x_1x_2 = 0,\\
&x_0x_4 - x_2^2 - x_3^2 - x_3x_5 + x_5^2 = 0,\\
&x_0x_5 - x_2x_3 - x_3x_4 = 0,\\
&x_1x_3 - x_2^2 - x_3x_5 = 0,\\
&x_1x_4 - x_2x_3 - x_3x_4 = 0,\\
&x_1x_5 - x_3^2 - x_3x_5 = 0,\\
&x_2x_4 - x_3^2 - x_3x_5 + x_5^2 = 0,\\
&x_2x_5 - x_3x_4 = 0.\\
\end{align*}

\noindent Genus of $X_0(60)$: $7$.

\noindent Cusps:
$(0 : 0 : 0 : 0 : -1 : 0 : 1),
(0 : 0 : 0 : 0 : 1 : 0 : 1),
(0 : 0 : -1/4 : -1/4 : 1/4 : 1/4 : 1),
(0 : 0 : -1/4 : 1/4 : 1/4 : -1/4 : 1),
(0 : 0 : 1/4 : -1/4 : -1/4 : 1/4 : 1),
(0 : 0 : 1/4 : 1/4 : -1/4 : -1/4 : 1),
(-1 : 0 : 0 : 0 : 0 : 0 : 1),
(-1/2 : -1/2 : -1/4 : -1/4 : -1/4 : -1/4 : 1),
(-1/2 : 1/2 : -1/4 : 1/4 : -1/4 : 1/4 : 1),
(1/2 : -1/2 : 1/4 : -1/4 : 1/4 : -1/4 : 1),
(1/2 : 1/2 : 1/4 : 1/4 : 1/4 : 1/4 : 1),
(1 : 0 : 0 : 0 : 0 : 0 : 1)$.

\noindent $C = X_0(60)/w_{15}$: elliptic curve $y^2 = x^3 + x^2 - x$.

\noindent Group structure of $J(C)(\Q)$: $J(C)(\Q) \simeq  \Z/6\Z \cdot [Q_C - O]$, where $Q_C := (1,-1)$.

\noindent Group structure of $G \subseteq J_0(60)(\Q)$: $G \simeq  \Z/4\Z \cdot D_1 \oplus \Z/24\Z \cdot D_2 \oplus \Z/24\Z \cdot D_3 \oplus \Z/24\Z \cdot D_4$, where $D_1, \ldots, D_4$ are generated by differences of cusps.

\noindent There are \textbf{no} quadratic points on $X_0(60)$ apart from cusps which are all defined over $\Q$.

\noindent Primes used in sieve: $13$.

\subsection{$X_0(79)$}
Model for $X_0(79)$: \begin{align*} 
&x_0^2 - 2x_0x_1 - x_1^2 - 3x_2^2 - 2x_2x_3 + 4x_2x_4 + 3x_3^2 + 2x_3x_4 - 11x_4^2 - x_5^2  = 0,\\
&x_0x_2 - x_1^2 + 3x_2x_3 - x_2x_4 - 2x_3^2 - x_3x_4 + 4x_4^2  = 0,\\
&x_0x_3 - x_1x_2 + 2x_2x_3 - x_3^2 + x_4^2  = 0,\\
&x_0x_4 - x_2^2 + 2x_2x_3 + x_2x_4 - x_3^2 - x_3x_4 + 2x_4^2  = 0,\\
&x_1x_3 - x_2^2 + 2x_2x_4 - x_4^2  = 0,\\
&x_1x_4 - x_2x_3 + x_2x_4 + x_3^2 - 2x_4^2 = 0.\\
\end{align*}

\noindent Genus of $X_0(79)$: $6$.

\noindent Cusps:
$(1 : 0 : 0 : 0 : 0 : 1), (-1 : 0 : 0 : 0 : 0 : 1)$.

\noindent $C = X_0^+(79)$: elliptic curve $y^2 + xy + y = x^3 + x^2 - 2x$.

\noindent Group structure of $J(C)(\Q)$: $J(C)(\Q) \simeq \Z \cdot [Q_C - O]$, where $Q_C := (0,0)$.

\noindent Group structure of $G \subseteq J_0(79)(\Q)$: $G \simeq  \Z \cdot D_{79} \oplus \Z/13\Z \cdot T_{79}$, where $D_{79}$, $T_{79}$ are as in \Cref{GandI}.

\noindent There are \textbf{no} exceptional non-cuspidal quadratic points on $X_0(79)$.

\noindent Primes used in sieve: $3$, $5$.

\subsection{$X_0(83)$}
Model for $X_0(83)$: \begin{align*} 
&x_0^2 - 2x_0x_1 - x_1^2 - x_2^2 + 2x_2x_3 - x_3^2 - 4x_3x_4 + 2x_3x_5 - 10x_4^2 + 24x_4x_5 - 31x_5^2 - x_6^2 = 0,\\
&x_0x_2 - x_1^2 + 2x_2x_3 - 2x_3^2 + 6x_3x_4 - 6x_3x_5 - 3x_4^2 + 6x_4x_5 = 0,\\
&x_0x_3 - x_1x_2 + 2x_2x_3 - 2x_3^2 + 5x_3x_4 - 2x_3x_5 - 4x_4^2 + 8x_4x_5 - 6x_5^2 = 0,\\
&x_0x_4 - x_2^2 + x_2x_3 + x_3x_4 - 3x_3x_5 + 4x_4^2 - 6x_4x_5 + 9x_5^2 = 0,\\
&x_0x_5 - x_3^2 + 2x_3x_4 - x_4^2 + 2x_4x_5 = 0,\\
&x_1x_3 - x_2^2 + 2x_3^2 - 4x_3x_4 - x_3x_5 + 7x_4^2 - 14x_4x_5 + 15x_5^2 = 0,\\
&x_1x_4 - x_2x_3 + x_3^2 - 2x_3x_4 + 2x_3x_5 + 3x_4^2 - 8x_4x_5 + 6x_5^2 = 0,\\
&x_1x_5 - x_3x_4 + x_3x_5 + x_4^2 - 2x_4x_5 = 0,\\
&x_2x_4 - x_3^2 + x_3x_4 + x_3x_5 - 3x_4^2 + 5x_4x_5 - 6x_5^2 = 0,\\
&x_2x_5 - x_4^2 + 2x_4x_5 - 3x_5^2 = 0.\\
\end{align*}

\noindent Genus of $X_0(83)$: $7$.

\noindent Cusps:
$(1 : 0 : 0 : 0 : 0 : 0 : 1), (-1 : 0 : 0 : 0 : 0 : 0 : 1)$.

\noindent $C = X_0^+(83)$: elliptic curve $y^2 + xy + y = x^3 + x^2 + x$.

\noindent Group structure of $J(C)(\Q)$: $J(C)(\Q) \simeq \Z \cdot [Q_C - O]$, where $Q_C := (0,0)$.

\noindent Group structure of $G \subseteq J_0(83)(\Q)$: $G \simeq  \Z \cdot D_{83} \oplus \Z/41\Z \cdot T_{83}$, where $D_{83}$, $T_{83}$ are as in \Cref{GandI}.

\noindent There are \textbf{no} exceptional non-cuspidal quadratic points on $X_0(83)$.

\noindent Primes used in sieve: $3$, $5$.

\subsection{$X_0(89)$}
Model for $X_0(89)$: \begin{align*} 
&x_0^2 - 2x_0x_1 - x_1^2 + x_2^2 + 6x_2x_3 - 21x_3^2 + 12x_3x_4 + 36x_3x_5 - 13x_4^2 - 6x_4x_5 - 21x_5^2 - x_6^2 = 0,\\
&x_0x_2 - x_1^2 + 3x_3^2 - 8x_3x_5 + x_4^2 + x_4x_5 + 5x_5^2 = 0,\\
&x_0x_3 - x_1x_2 + 3x_3^2 - 6x_3x_5 + 2x_4x_5 + 3x_5^2 = 0,\\
&x_0x_4 - x_2^2 + 2x_3^2 + 2x_3x_4 - 7x_3x_5 + 2x_4^2 - 2x_4x_5 + 5x_5^2 = 0,\\
&x_0x_5 - x_2x_3 + x_3^2 + 2x_3x_4 - 3x_3x_5 - x_4x_5 + 3x_5^2 = 0,\\
&x_1x_3 - x_2^2 + 3x_3x_4 - 4x_3x_5 + x_4^2 - 4x_4x_5 + 4x_5^2 = 0,\\
&x_1x_4 - x_2x_3 + 2x_3x_4 - x_3x_5 - 2x_4x_5 + x_5^2 = 0,\\
&x_1x_5 - x_3^2 + x_3x_4 + 2x_3x_5 - 2x_4x_5 - x_5^2 = 0,\\
&x_2x_4 - x_3^2 + 3x_3x_5 - x_4^2 - x_4x_5 - x_5^2 = 0,\\
&x_2x_5 - x_3x_4 + x_3x_5 + x_4x_5 - 2x_5^2 = 0.\\
\end{align*}

\noindent Genus of $X_0(89)$: $7$.

\noindent Cusps:
$(1 : 0 : 0 : 0 : 0 : 0 : 1), (-1 : 0 : 0 : 0 : 0 : 0 : 1)$.

\noindent $C = X_0^+(89)$: elliptic curve $y^2 - 19xy - y = x^3 - 89x^2 - 10x$.

\noindent Group structure of $J(C)(\Q)$: $J(C)(\Q) \simeq \Z \cdot [Q_C - O]$, where $Q_C := (0,0)$.

\noindent Group structure of $G \subseteq J_0(89)(\Q)$: $G \simeq  \Z \cdot D_{89} \oplus \Z/22\Z \cdot T_{89}$, where $D_{89}$, $T_{89}$ are as in \Cref{GandI}.

\noindent There are \textbf{no} exceptional non-cuspidal quadratic points on $X_0(89)$.

\noindent Primes used in sieve: $3$, $5$, $7$.
\newpage
\subsection{$X_0(95)$}
Model for $X_0(95)$: \begin{align*} 
&x_0^2 + 4x_2x_3 + 4x_2x_5 - 3x_3^2 + 2x_3x_4 + 4x_3x_5 + 19x_4^2 - 32x_4x_5 + 10x_5^2 - x_6^2 + 2x_7x_8 + 4x_8^2 = 0,\\
&x_0x_1 + 2x_1x_5 + 3x_2x_3 - 3x_2x_5 - 5x_3^2 - 2x_3x_4 + 6x_3x_5 + 14x_4^2 - 26x_4x_5 + 13x_5^2 - x_6x_7 + x_8^2 = 0,\\
&x_0x_2 - 2x_2x_5 - 2x_3^2 - x_3x_4 + x_3x_5 - 2x_4^2 + x_4x_5 + x_5^2 - x_7^2 = 0,\\
&x_0x_3 - 2x_2x_3 + 3x_3^2 - 2x_3x_5 - 9x_4^2 + 16x_4x_5 - 7x_5^2 - x_7x_8 - x_8^2 = 0,\\
&x_0x_4 - 2x_2x_3 + x_3^2 + 2x_3x_4 - 2x_3x_5 - 6x_4^2 + 10x_4x_5 - 4x_5^2 - x_8^2 = 0,\\
&x_0x_5 - x_2x_3 - x_2x_5 - x_3^2 + x_3x_4 + x_3x_5 - x_4^2 - x_4x_5 + 2x_5^2 = 0,\\
&x_0x_7 - x_1x_6 + x_4x_6 + 2x_4x_7 - x_4x_8 - x_5x_6 + x_5x_8 = 0,\\
&x_0x_8 - x_2x_6 + x_3x_6 - x_4x_7 + 3x_4x_8 + x_5x_6 + x_5x_7 - x_5x_8 = 0,\\
&x_1^2 - 4x_4^2 + 8x_4x_5 - 4x_5^2 - x_7^2 = 0,\\
&x_1x_2 - 2x_1x_5 - 2x_2x_3 + 2x_2x_5 + 4x_3^2 - 2x_3x_4 - 2x_3x_5 - 8x_4^2 + 18x_4x_5 - 10x_5^2 - x_7x_8 - x_8^2 = 0,\\
&x_1x_3 - x_1x_5 - 2x_2x_3 + 2x_2x_5 + 2x_3^2 + 2x_3x_4 - 2x_3x_5 - 8x_4^2 + 14x_4x_5 - 8x_5^2 - x_8^2 = 0,\\
&x_1x_4 - x_1x_5 - x_2x_3 + x_2x_5 + x_3^2 - 2x_4^2 + 4x_4x_5 - 3x_5^2 = 0,\\
&x_1x_7 - x_2x_6 - x_4x_7 + 2x_5x_6 + x_5x_7 = 0,\\
&x_1x_8 - x_3x_6 + x_4x_6 - x_4x_8 + x_5x_8 = 0,\\
&x_2^2 - 2x_2x_3 - 2x_2x_5 + x_3^2 + 4x_3x_4 - 2x_3x_5 - 8x_4^2 + 12x_4x_5 - 3x_5^2 - x_8^2 = 0,\\
&x_2x_4 - x_2x_5 - x_3^2 + x_3x_4 + x_3x_5 - 3x_4x_5 + 2x_5^2 = 0,\\
&x_2x_7 - x_3x_6 - x_4x_7 + x_5x_6 - x_5x_7 = 0,\\
&x_2x_8 - x_4x_6 - x_4x_8 + x_5x_6 - x_5x_8 = 0,\\
&x_3x_7 - x_4x_6 - x_4x_7 - x_4x_8 + x_5x_6 + x_5x_8 = 0,\\
&x_3x_8 - x_4x_7 + x_5x_7 - x_5x_8 = 0,\\
&x_6x_8 - x_7^2 + x_7x_8 + x_8^2 = 0.
\end{align*}

\noindent Genus of $X_0(95)$: $9$.

\noindent Cusps:
$(-1 : 0 : 0 : 0 : 0 : 0 : 1 : 0 : 0),
(1 : 0 : 0 : 0 : 0 : 0 : 1 : 0 : 0),
(-3/5 : 0 : -2/5 : -1/5 : -1/5 : -1/5 : 1 : 0 : 0),
(3/5 : 0 : 2/5 : 1/5 : 1/5 : 1/5 : 1 : 0 : 0)
$.

\noindent $C = X_0(95)/w_{19}$: hyperelliptic curve $y^2 = x^8 - 2x^7 - 7x^6 + 16x^5 - 2x^4 -2x^3 - 4x^2 + 5$.

\noindent Group structure of $J(C)(\Q)$: $J(C)(\Q) \simeq \Z/2\Z \cdot [P + P^{\sigma} - 2\infty_{+}] \oplus \Z/10\Z \cdot [\infty_{-} - \infty_{+}]$, where \[
P := \Big(\frac{1}{2}(-\sqrt{5} + 3) : \frac{1}{2}(5\sqrt{5} - 7) : 1 \Big) \in C(\Q(\sqrt{5})).
\]

\noindent Group structure of $G \subseteq J_0(95)(\Q)$: $G \simeq \Z/6\Z \cdot D_1 \oplus \Z/180\Z \cdot D_2$, where $D_1, D_2$ are generated by differences of cusps.

\noindent Below is the table of \textbf{all} quadratic points on $X_0(95)$ (up to Galois conjugacy) apart from cusps, which are all defined over $\Q$.

\begin{table}[h!]
\centering
 \begin{tabular}{||c c c c c||} 
 \hline
 Name & $\theta^2$ & Coordinates & $j$-invariant & CM \\ [0.5ex] 
 \hline\hline
 $P_1$ & -19 & $\scriptscriptstyle{(\frac{1}{14}(-\theta-17) : \frac{1}{7}(\theta-11) : \frac{1}{7}(\theta+3) : \frac{1}{7}(\theta-4) : \frac{1}{14}(\theta+3) : 1 : 0 : 0 : 0)}$ & -884736 & -19 \\ [1ex] 
 \hline
 \end{tabular}
\end{table}

\noindent Primes used in sieve: $11$, $13$.

\subsection{$X_0(101)$}
Model for $X_0(101)$:{\tiny
\begin{align*}
&x_0^2 - 4x_1^2 - 4x_1x_2 + 2x_2^2 + 8x_2x_3 + 4x_3^2 - 20x_3x_4 - 41x_4^2 + 112x_4x_5 + 18x_4x_6 - 22x_5^2 - 204x_5x_6 + 170x_6^2 - x_7^2 = 0,\\
&x_0x_2 - x_1^2 + 8x_3x_4 - 8x_4^2 - 15x_4x_5 + 21x_4x_6 + 32x_5^2 - 61x_5x_6 + 31x_6^2 = 0,\\
&x_0x_3 - x_1x_2 + 8x_3x_4 - 10x_4^2 - 5x_4x_5 + 17x_4x_6 + 22x_5^2 - 54x_5x_6 + 30x_6^2 = 0,\\
&x_0x_4 - x_2^2 + 6x_3x_4 - x_4^2 - 20x_4x_5 + 15x_4x_6 + 25x_5^2 - 30x_5x_6 + 9x_6^2 = 0,\\
&x_0x_5 - x_2x_3 + 4x_3x_4 - x_4^2 - 10x_4x_5 + 6x_4x_6 + 12x_5^2 - 8x_5x_6 = 0,\\
&x_0x_6 - x_3^2 + 2x_3x_4 - x_4x_5 - x_4x_6 - 2x_5^2 + 10x_5x_6 - 5x_6^2 = 0,\\
&x_1x_3 - x_2^2 + 8x_4^2 - 18x_4x_5 - 2x_4x_6 + 5x_5^2 + 26x_5x_6 - 23x_6^2 = 0,\\
&x_1x_4 - x_2x_3 + 6x_4^2 - 9x_4x_5 - 6x_4x_6 - 4x_5^2 + 33x_5x_6 - 24x_6^2 = 0,\\
&x_1x_5 - x_3^2 + 4x_4^2 - x_4x_5 - 8x_4x_6 - 10x_5^2 + 32x_5x_6 - 20x_6^2 = 0,\\
&x_1x_6 - x_3x_4 + 2x_4^2 - 3x_4x_6 - 4x_5^2 + 12x_5x_6 - 8x_6^2 = 0,\\
&x_2x_4 - x_3^2 + 6x_4x_5 - 7x_4x_6 - 10x_5^2 + 18x_5x_6 - 8x_6^2 = 0,\\
&x_2x_5 - x_3x_4 + 4x_4x_5 - 2x_4x_6 - 5x_5^2 + 3x_5x_6 = 0,\\
&x_2x_6 - x_4^2 + 2x_4x_5 - 5x_5x_6 + 3x_6^2 = 0,\\
&x_3x_5 - x_4^2 + 2x_4x_6 + 2x_5^2 - 9x_5x_6 + 6x_6^2 = 0,\\
&x_3x_6 - x_4x_5 + 2x_5^2 - 4x_5x_6 + 2x_6^2 = 0.\\
\end{align*}
}

\noindent Genus of $X_0(101)$: $8$.

\noindent Cusps:
$(1 : 0 : 0 : 0 : 0 : 0 : 0 : 1), (-1 : 0 : 0 : 0 : 0 : 0 : 0 : 1)$.

\noindent $C = X_0^+(101)$: elliptic curve $y^2 + y = x^3 + x^2 - x - 1$.

\noindent Group structure of $J(C)(\Q)$: $J(C)(\Q) \simeq \Z \cdot [Q_C - O]$, where $Q_C := (-1, 0)$.

\noindent Group structure of $G \subseteq J_0(101)(\Q)$: $G \simeq  \Z \cdot D_{101} \oplus \Z/25\Z \cdot T_{101}$, where $D_{101}$, $T_{101}$ are as in \Cref{GandI}.

\noindent There are \textbf{no} exceptional non-cuspidal quadratic points on $X_0(101)$.

\noindent Primes used in sieve: $3$, $5$.

\subsection{$X_0(119)$}
Model for $X_0(119)$:{\tiny
\begin{align*}
&x_0^2 + 8x_1x_6 - 8x_3x_4 + 24x_3x_6 + 17x_4^2 - 24x_4x_5 - 4x_4x_6 + 5x_5^2 + 38x_5x_6 - 39x_6^2 - x_7^2 + 2x_9^2 + 2x_9x_{10} + 6x_{10}^2 = 0,\\
&x_0x_1 + 4x_1x_6 - x_3x_4 + 2x_3x_6 + 3x_4x_5 - 4x_5^2 + 2x_5x_6 - 4x_6^2 - x_7x_8 + x_9x_{10} + x_{10}^2 = 0,\\
&x_0x_2 - 2x_1x_6 + 4x_2x_6 + 2x_3x_4 - 6x_3x_6 - 4x_4^2 + 4x_4x_5 + 2x_4x_6 + 3x_5^2 - 14x_5x_6 + 7x_6^2 - x_8^2 = 0,\\
&x_0x_3 - 3x_1x_6 + x_3x_4 - x_3x_6 - 3x_4^2 + 3x_4x_5 - 4x_5^2 + 2x_5x_6 - 4x_6^2 - x_8x_9 - x_9x_{10} = 0,\\
&x_0x_4 - 2x_1x_6 + 3x_3x_4 - 8x_3x_6 - 4x_4^2 + 5x_4x_5 + 2x_4x_6 + x_5^2 - 12x_5x_6 + 13x_6^2 - x_9^2 - 2x_{10}^2 = 0,\\
&x_0x_5 - x_1x_6 - x_3x_6 + x_4x_5 - 3x_4x_6 - x_5^2 + 4x_5x_6 + x_6^2 - x_9x_{10} = 0,\\
&x_0x_6 - x_1x_6 + x_3x_4 - 3x_3x_6 - 2x_4^2 + 3x_4x_5 - 6x_5x_6 + 8x_6^2 - x_{10}^2 = 0,\\
&x_0x_8 - x_1x_7 + x_5x_7 + 2x_5x_8 - x_6x_7 + 2x_6x_8 = 0,\\
&x_0x_9 - x_2x_7 + x_4x_7 + x_5x_8 + 4x_5x_9 - x_5x_{10} - x_6x_8 + x_6x_{10} = 0,\\
&x_0x_{10} - x_3x_7 + x_5x_7 - 2x_5x_8 + x_5x_9 + 4x_5x_{10} + 2x_6x_7 + 2x_6x_8 - x_6x_9 = 0,\\
\end{align*}
}
{\tiny
\begin{align*}
&x_1^2 + 4x_5^2 - 8x_5x_6 + 4x_6^2 - x_8^2 = 0,\\
&x_1x_2 - 2x_1x_6 - x_4^2 + x_4x_5 + 3x_4x_6 - 3x_5^2 + 4x_5x_6 - 5x_6^2 - x_8x_9 - x_9x_{10} = 0,\\
&x_1x_3 - 3x_1x_6 + 3x_3x_4 - 6x_3x_6 - 3x_4^2 + 2x_4x_5 + x_4x_6 + 3x_5^2 - 10x_5x_6 + 13x_6^2 - x_9^2 - 2x_{10}^2 = 0,\\
&x_1x_4 - 2x_1x_6 + x_4^2 - x_4x_5 - 3x_4x_6 - x_5^2 + 4x_5x_6 + x_6^2 - x_9x_{10} = 0,\\
&x_1x_5 - x_1x_6 + x_3x_4 - 2x_3x_6 - x_4^2 + 2x_4x_5 - x_4x_6 + x_5^2 - 6x_5x_6 + 7x_6^2 - x_{10}^2 = 0,\\
&x_1x_8 - x_2x_7 - x_5x_8 + 2x_6x_7 + x_6x_8 = 0,\\
&x_1x_9 - x_3x_7 + x_5x_7 - 2x_5x_8 + 2x_5x_{10} + 2x_6x_7 + 2x_6x_8 - 2x_6x_{10} = 0,\\
&x_1x_{10} - x_4x_7 - x_5x_8 - 2x_5x_9 + 2x_6x_7 + x_6x_8 + 2x_6x_9 = 0,\\
&x_2^2 - 4x_2x_6 + x_4^2 - 2x_4x_5 - 2x_4x_6 + 2x_5^2 + 6x_6^2 - x_9^2 - 2x_{10}^2 = 0,\\
&x_2x_3 - 3x_2x_6 - 2x_3x_4 + 2x_3x_6 + 3x_4^2 - 2x_4x_5 - 4x_4x_6 - 3x_5^2 + 10x_5x_6 - x_6^2 - x_9x_{10} = 0,\\
&x_2x_4 - 2x_2x_6 + x_4x_5 - 3x_4x_6 + x_5^2 - 4x_5x_6 + 7x_6^2 - x_{10}^2 = 0,\\
&x_2x_5 - x_2x_6 - x_3x_4 + 2x_3x_6 + x_4^2 - x_4x_5 - x_5^2 + 2x_5x_6 - 3x_6^2 = 0,\\
&x_2x_8 - x_3x_7 - 3x_5x_8 + 3x_6x_7 + x_6x_8 = 0,\\
&x_2x_9 - x_4x_7 - x_5x_8 - 2x_5x_9 + x_5x_{10} + 2x_6x_7 + x_6x_8 - x_6x_{10} = 0,\\
&x_2x_{10} - x_5x_7 - x_5x_8 - x_5x_9 - 2x_5x_{10} + x_6x_7 + x_6x_8 + x_6x_9 = 0,\\
&x_3^2 - 6x_3x_6 - 2x_4^2 + 6x_4x_5 + 2x_4x_6 - 3x_5^2 - 6x_5x_6 + 10x_6^2 - x_{10}^2 = 0,\\
&x_3x_5 - x_3x_6 - x_4^2 + x_4x_5 + 3x_4x_6 - 2x_5^2 - x_5x_6 - x_6^2 = 0,\\
&x_3x_8 - x_4x_7 - 2x_5x_8 + 2x_6x_7 - x_6x_8 = 0,\\
&x_3x_9 - x_5x_7 - x_5x_8 - x_5x_9 + x_5x_{10} + x_6x_7 + x_6x_8 - 2x_6x_9 - x_6x_{10} = 0,\\
&x_3x_{10} - x_5x_8 - x_5x_9 - x_5x_{10} + x_6x_8 + x_6x_9 - 2x_6x_{10} = 0,\\
&x_4x_8 - x_5x_7 - x_5x_8 + x_6x_7 - x_6x_8 = 0,\\
&x_4x_9 - x_5x_8 - x_5x_9 + x_5x_{10} + x_6x_8 - x_6x_9 - x_6x_{10} = 0,\\
&x_4x_{10} - x_5x_9 - x_5x_{10} + x_6x_9 - x_6x_{10} = 0,\\
&x_7x_9 - x_8^2 + x_9^2 + x_{10}^2 = 0,\\
&x_7x_{10} - x_8x_9 = 0,\\
&x_8x_{10} - x_9^2 - x_{10}^2 = 0.
\end{align*}
}%

\noindent Genus of $X_0(119)$: $11$.

\noindent Cusps:
$(-1 : 0 : 0 : 0 : 0 : 0 : 0 : 1 : 0 : 0 : 0),
(1 : 0 : 0 : 0 : 0 : 0 : 0 : 1 : 0 : 0 : 0),
(-3/7 : 0 : -2/7 : -3/7 : -2/7 : -1/7 : -1/7 : 1 : 0 : 0 : 0),
(3/7 : 0 : 2/7 : 3/7 : 2/7 : 1/7 : 1/7 : 1 : 0 : 0 : 0)$.

\noindent $C = X_0(119)/w_{17}$: hyperelliptic curve $y^2 = x^{10} + 2x^8 - 11x^6 + 14x^5 - 40x^4 + 42x^3 - 48x^2 + 28x - 7$.

\noindent Group structure of $J(C)(\Q)$: $J(C)(\Q) \simeq \Z/9\Z \cdot [\infty_{+} - \infty_{-}]$.

\noindent Group structure of $G \subseteq J_0(119)(\Q)$: $G \simeq \Z/8\Z \cdot D_1 \oplus \Z/288\Z \cdot D_2$, where $D_1, D_2$ are generated by differences of cusps.

\noindent Below is the table of \textbf{all} quadratic points on $X_0(119)$ (up to Galois conjugacy) apart from cusps which are all defined over $\Q$.

\begin{table}[h!]
\centering
 \begin{tabular}{||c c c c c||} 
 \hline
 Name & $\theta^2$ & Coordinates & $j$-invariant & CM \\ [0.5ex] 
 \hline\hline
 $P_1$ & -19 & $\scriptscriptstyle{(\frac{1}{7}(-2\theta + 1) : 0 : \frac{1}{7}(\theta - 4) : \frac{1}{14}(3\theta - 19) : \frac{1}{7}(\theta + 3) : \frac{1}{14}(\theta - 11) : \frac{1}{14}(\theta + 3) : -2 : 2 : -1 : 1)}$ & -884736 & -19 \\ [1ex] 
 \hline
 $P_2$ & -19 & $\scriptscriptstyle{(\frac{1}{7}(-2\theta + 1) : 0 : \frac{1}{7}(\theta - 4) : \frac{1}{14}(3\theta - 19) : \frac{1}{7}(\theta + 3) : \frac{1}{14}(\theta - 11) : \frac{1}{14}(\theta + 3) : 2 : -2 : 1 : -1)}$ & -884736 & -19\\ [1ex] 
 \hline
 \end{tabular}
\end{table}

\noindent Primes used in sieve: $5$.

\subsection{$X_0(131)$}
Model for $X_0(131)$:{\tiny
\begin{align*}
&x_0^2 - 2x_1^2 - 4x_1x_2 - 3x_2^2 + 4x_2x_3 + 6x_3^2 + 8x_3x_4 + 2x_4^2 - 48x_4x_5 - 44x_5^2 + 38x_5x_6 + 337x_6^2 - 244x_6x_7 - 1368x_7^2 + \\
&3738x_7x_8 - 4056x_7x_9 - 4088x_8^2 + 6808x_8x_9 - 2706x_9^2 - x_{10}^2 = 0,\\
&x_0x_2 - x_1^2 + 12x_4x_5 - 2x_5^2 - 36x_5x_6 - 3x_6^2 + 119x_6x_7 - 108x_7^2 + 38x_7x_8 - 12x_7x_9 + 22x_8^2 - 8x_8x_9 + 98x_9^2 = 0,\\
&x_0x_3 - x_1x_2 + 14x_4x_5 - 12x_5^2 - 14x_5x_6 - 15x_6^2 + 113x_6x_7 - 106x_7^2 + 31x_7x_8 - 20x_7x_9 + 32x_8^2 - 3x_8x_9 + 86x_9^2 = 0,\\
&x_0x_4 - x_2^2 + 12x_4x_5 - 35x_5x_6 + x_6^2 + 92x_6x_7 - 71x_7^2 + 4x_7x_8 + 21x_7x_9 + 42x_8^2 - 54x_8x_9 + 93x_9^2 = 0,\\
&x_0x_5 - x_2x_3 + 9x_4x_5 - 22x_5x_6 - 12x_6^2 + 87x_6x_7 - 43x_7^2 - 59x_7x_8 + 84x_7x_9 + 109x_8^2 - 162x_8x_9 + 128x_9^2 = 0,\\
&x_0x_6 - x_3^2 + 6x_4x_5 - 5x_5x_6 - 27x_6^2 + 48x_6x_7 + 67x_7^2 - 255x_7x_8 + 280x_7x_9 + 297x_8^2 - 480x_8x_9 + 214x_9^2 = 0,\\
&x_0x_7 - x_3x_4 + 4x_4x_5 - 4x_5x_6 - 12x_6^2 + 21x_6x_7 + 35x_7^2 - 116x_7x_8 + 128x_7x_9 + 134x_8^2 - 218x_8x_9 + 94x_9^2 = 0,\\
&x_0x_8 - x_4^2 + 2x_4x_5 - x_5x_6 + x_6^2 - 8x_6x_7 + 18x_7^2 - 25x_7x_8 + 29x_7x_9 + 29x_8^2 - 45x_8x_9 + 11x_9^2 = 0,\\
&x_0x_9 - x_5^2 + 2x_5x_6 + x_6^2 - 2x_6x_7 - 7x_7^2 + 20x_7x_8 - 16x_7x_9 - 20x_8^2 + 37x_8x_9 - 10x_9^2 = 0,\\
&x_1x_3 - x_2^2 + 14x_5^2 - 24x_5x_6 - 12x_6^2 + 21x_6x_7 + 98x_7^2 - 261x_7x_8 + 298x_7x_9 + 281x_8^2 - 492x_8x_9 + 202x_9^2 = 0,\\
&x_1x_4 - x_2x_3 + 12x_5^2 - 14x_5x_6 - 23x_6^2 + 15x_6x_7 + 131x_7^2 - 334x_7x_8 + 370x_7x_9 + 357x_8^2 - 613x_8x_9 + 240x_9^2 = 0,\\
&x_1x_5 - x_3^2 + 9x_5^2 - 36x_6^2 + 182x_7^2 - 436x_7x_8 + 468x_7x_9 + 460x_8^2 - 777x_8x_9 + 287x_9^2 = 0,\\
&x_1x_6 - x_3x_4 + 6x_5^2 - 17x_6^2 - 13x_6x_7 + 113x_7^2 - 244x_7x_8 + 261x_7x_9 + 249x_8^2 - 428x_8x_9 + 148x_9^2 = 0,\\
&x_1x_7 - x_4^2 + 4x_5^2 - 4x_6^2 - 24x_6x_7 + 71x_7^2 - 122x_7x_8 + 129x_7x_9 + 115x_8^2 - 205x_8x_9 + 59x_9^2 = 0,\\
&x_1x_8 - x_4x_5 + 2x_5^2 - x_6^2 - 8x_6x_7 + 22x_7^2 - 36x_7x_8 + 39x_7x_9 + 33x_8^2 - 62x_8x_9 + 17x_9^2 = 0,\\
&x_1x_9 - x_5x_6 + 2x_6^2 + x_6x_7 - 8x_7^2 + 18x_7x_8 - 19x_7x_9 - 19x_8^2 + 31x_8x_9 - 12x_9^2 = 0,\\
&x_2x_4 - x_3^2 + 12x_5x_6 - 28x_6^2 + x_6x_7 + 99x_7^2 - 246x_7x_8 + 258x_7x_9 + 264x_8^2 - 434x_8x_9 + 160x_9^2 = 0,\\
&x_2x_5 - x_3x_4 + 9x_5x_6 - 12x_6^2 - 22x_6x_7 + 77x_7^2 - 146x_7x_8 + 149x_7x_9 + 144x_8^2 - 243x_8x_9 + 72x_9^2 = 0,\\
&x_2x_6 - x_4^2 + 6x_5x_6 - 29x_6x_7 + 37x_7^2 - 34x_7x_8 + 30x_7x_9 + 21x_8^2 - 42x_8x_9 - 7x_9^2 = 0,\\
&x_2x_7 - x_4x_5 + 4x_5x_6 - 13x_6x_7 + 12x_7^2 - 9x_7x_8 + 6x_7x_9 + 3x_8^2 - 7x_8x_9 - 7x_9^2 = 0,\\
&x_2x_8 - x_5^2 + 2x_5x_6 - x_6x_7 - 5x_7^2 + 12x_7x_8 - 16x_7x_9 - 16x_8^2 + 26x_8x_9 - 11x_9^2 = 0,\\
&x_2x_9 - x_6^2 + 2x_6x_7 + x_7^2 - 6x_7x_8 + 4x_7x_9 + 7x_8^2 - 12x_8x_9 + 4x_9^2 = 0,\\
&x_3x_5 - x_4^2 + 9x_6^2 - 24x_6x_7 + 6x_7^2 + 34x_7x_8 - 39x_7x_9 - 50x_8^2 + 74x_8x_9 - 45x_9^2 = 0,\\
&x_3x_6 - x_4x_5 + 6x_6^2 - 9x_6x_7 - 17x_7^2 + 59x_7x_8 - 64x_7x_9 - 67x_8^2 + 109x_8x_9 - 47x_9^2 = 0,\\
&x_3x_7 - x_5^2 + 4x_6^2 - 22x_7^2 + 48x_7x_8 - 52x_7x_9 - 51x_8^2 + 86x_8x_9 - 31x_9^2 = 0,\\
&x_3x_8 - x_5x_6 + 2x_6^2 - 7x_7^2 + 16x_7x_8 - 19x_7x_9 - 21x_8^2 + 32x_8x_9 - 12x_9^2 = 0,\\
&x_3x_9 - x_6x_7 + 2x_7^2 - 3x_7x_8 - x_7x_9 + 2x_8^2 - 4x_8x_9 - 2x_9^2 = 0,\\
&x_4x_6 - x_5^2 + 6x_6x_7 - 18x_7^2 + 31x_7x_8 - 33x_7x_9 - 29x_8^2 + 52x_8x_9 - 15x_9^2 = 0,\\
&x_4x_7 - x_5x_6 + 4x_6x_7 - 6x_7^2 + 3x_7x_8 - 2x_7x_9 - x_8^2 + 2x_8x_9 + 4x_9^2 = 0,\\
&x_4x_8 - x_6^2 + 2x_6x_7 - 5x_7x_8 + 4x_7x_9 + 3x_8^2 - 7x_8x_9 + 4x_9^2 = 0,\\
&x_4x_9 - x_7^2 + 2x_7x_8 - 6x_7x_9 - 3x_8^2 + 5x_8x_9 - 5x_9^2 = 0,\\
&x_5x_7 - x_6^2 + 4x_7^2 - 12x_7x_8 + 14x_7x_9 + 13x_8^2 - 23x_8x_9 + 10x_9^2 = 0,\\
&x_5x_8 - x_6x_7 + 2x_7^2 - 4x_7x_8 + 2x_7x_9 + x_8^2 - 3x_8x_9 = 0,\\
&x_5x_9 - x_7x_8 - 3x_7x_9 + x_8x_9 - 3x_9^2 = 0,\\
&x_6x_8 - x_7^2 + 2x_7x_8 - 3x_7x_9 - 4x_8^2 + 5x_8x_9 - 2x_9^2 = 0,\\
&x_6x_9 - 2x_7x_9 - x_8^2 + x_8x_9 - 2x_9^2 = 0.\\
\end{align*}
}

\noindent Genus of $X_0(131)$: $11$.

\noindent Cusps:
$(1 : 0 : 0 : 0 : 0 : 0 : 0 : 0 : 0 : 0 : 1), (-1 : 0 : 0 : 0 : 0 : 0 : 0 : 0 : 0 : 0 : 1)$.

\noindent $C = X_0^+(131)$: elliptic curve $y^2 + y = x^3 - x^2 + x$ 

\noindent Group structure of $J(C)(\Q)$: $J(C)(\Q) \simeq \Z \cdot [Q_C - O]$, where $Q_C := (0, 0)$.

\noindent Group structure of $G \subseteq J_0(131)(\Q)$: $G \simeq \Z/65\Z \cdot T_{131}$, where $T_{131}$ is as in \Cref{GandI}.

\noindent There are \textbf{no} exceptional non-cuspidal quadratic points on $X_0(131)$.

\noindent Primes used in sieve: $3$, $5$.

\bibliographystyle{siam}
\bibliography{bibliography1}
\end{document}